\newtheorem{theorem}{Theorem}[section]
\newtheorem*{thmmain}{Theorem}
\newtheorem{lemma}[theorem]{Lemma}
\newtheorem{proposition}[theorem]{Proposition}
\newtheorem*{propmain}{Proposition}
\newtheorem{corollary}[theorem]{Corollary}
\theoremstyle{definition}
\theoremstyle{remark}
\numberwithin{equation}{section}
\begin{document}

\title[Ancient solutions of the centro-affine normal flow]
 {CENTRO-AFFINE NORMAL FLOWS ON CURVES:\\ HARNACK ESTIMATES AND ANCIENT SOLUTIONS
}

\author[M.N. Ivaki]{Mohammad N. Ivaki}
\address{Institut f\"{u}r Diskrete Mathematik und Geometrie, Technische Universit\"{a}t Wien,
Wiedner Hauptstr. 8--10, 1040 Wien, Austria}
\curraddr{}
\email{mohammad.ivaki@tuwien.ac.at}
\date{}

\dedicatory{}
\subjclass[2010]{Primary 53C44, 53A04, 52A10; Secondary 53A15}
\keywords{Centro-affine normal flow; affine
differential geometry; affine support function; ancient solutions.}

\begin{abstract}
We prove that the only compact, origin-symmetric, strictly convex ancient solutions of the planar $p$ centro-affine normal flows are contracting origin-centered ellipses.
\end{abstract}

\maketitle

\section{Introduction}
The setting of this paper is the two-dimensional Euclidean space, $\mathbb{R}^2.$ A compact convex subset of $\mathbb{R}^2$ with non-empty interior is called a \emph{convex body}. The set of smooth, strictly convex bodies in $\mathbb{R}^2$ is denoted by $\mathcal{K}$. Write $\mathcal{K}_{0}$ for the set of smooth, strictly convex bodies whose interiors contain the origin of the plane.

Let $K$ be a smooth, strictly convex body $\mathbb{R}^2$ and let $X_K:\partial K\to\mathbb{R}^2$
be a smooth embedding of $\partial K$, the boundary of $K$. Write $\mathbb{S}^1$ for the unit circle and write $\nu:\partial K\to \mathbb{S}^1$ for the Gauss map of $\partial K$. That is, at each point $x\in\partial K$, $\nu(x)$ is the unit outwards normal at $x$.
The support function of $K\in\mathcal{K}_0 $ as a function on the unit circle is defined by
$s(z):= \langle X(\nu^{-1}(z)), z \rangle,$
for each $z\in\mathbb{S}^1$.
We denote the curvature of $\partial K$ by $\kappa$ which as a function on $\partial K$ is related to the support function by
 \[\frac{1}{\kappa(\nu^{-1}(z))}:=\mathfrak{r}(z)=\frac{\partial^2}{\partial \theta^2}s(z)+s(z).\]
Here and afterwards, we identify $z=(\cos \theta, \sin \theta)$ with $\theta$. The function $\mathfrak{r}$ is called the radius of curvature. The affine support function of $K$ is defined by $\sigma:\partial K\to \mathbb{R}$ and $\sigma(x):=s(\nu(x))\mathfrak{r}^{1/3}(\nu(x)).$ The affine support function is invariant under the group of special linear transformations, $SL(2)$, and it plays a basic role in our argument.

Let $K\in \mathcal{K}_0$. A family of convex bodies $\{K_t\}_t\subset\mathcal{K}_0$ given by the smooth map $X:\partial K\times[0,T)\to \mathbb{R}^2$ is said to be a solution to the $p$ centro-affine normal flow, in short $p$-flow, with the initial data $X_K$, if the following evolution equation is satisfied:
\begin{equation}\label{e: flow0}
 \partial_{t}X(x,t)=-\left(\frac{\kappa(x,t)}{\langle X(x,t), \nu(x,t)\rangle^3}\right)^{\frac{p}{p+2}-\frac{1}{3}}\kappa^{\frac{1}{3}}(x,t)\, \nu(x,t),~~
 X(\cdot,0)=X_K,
\end{equation}
for a fixed $1<p<\infty$. In this equation, $0<T<\infty$ is the maximal time that the solution exists, and $\nu(x,t)$ is the unit normal to the curve $ X(\partial K,t)=\partial K_t$ at $X(x,t).$ This family of flows for $p>1$ was defined by Stancu \cite{S}. The case $p=1$ is the well-known affine normal flow whose asymptotic behavior was investigated by Sapiro and Tannebuam \cite{ST}, and by Andrews in a more general setting \cite{BA5,BA4}: Any convex solution to the affine normal flow, after appropriate rescaling converges to an ellipse in the $\mathcal{C}^{\infty}$ norm. For $p>1$, similar result was obtained with smooth, origin-symmetric, strictly convex initial data by the author and Stancu \cite{Ivaki,IS}. Moreover, ancient solutions of the affine normal flow have been also classified: the only compact, convex ancient solutions of the affine normal flow are contracting ellipsoids. This result in $\mathbb{R}^n$, for $n\ge 3$, was proved by Loftin and Tsui \cite{LT} and in dimension two by S. Chen \cite{Chen}, and also by the author with a different method. We recall that a solution of flow is called an ancient solution if it exists on $(-\infty, T)$. Here we classify compact, origin-symmetric, strictly convex ancient solutions of the planar $p$ centro-affine normal flows:
\begin{thmmain}
The only compact, origin-symmetric, strictly convex ancient solutions of the $p$-flows are contracting origin-centred ellipses.
\end{thmmain}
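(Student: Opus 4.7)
My plan is to exploit three ingredients: the differential Harnack estimate announced in the title of this paper, the monotonicity of an appropriate centro-affine isoperimetric quantity along the flow, and the rigidity of origin-centred ellipses as the unique bodies on which the affine support function $\sigma$ is constant. The picture is standard for ancient solution classifications: a scale-invariant monotone quantity, bounded on one side by an isoperimetric inequality, must saturate that inequality at $t=-\infty$, and equality in the inequality is attained only on ellipses.

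More precisely, I would first pass to the normalization of \eqref{e: flow0} that preserves the enclosed area. A short computation shows that the self-similar (homothetically shrinking) solutions of the $p$-flow are characterized by $s^{3}\mathfrak{r}\equiv\mathrm{const}$, i.e.\ by $\sigma\equiv\mathrm{const}$; within the origin-symmetric class this picks out precisely the origin-centred ellipses. Next I would identify an $SL(2)$-invariant functional $\Omega_p(K)$ built from $\sigma$ whose time derivative along the unnormalized $p$-flow is sign-definite and vanishes precisely on the self-similar locus. Since $\Omega_p$ is bounded on one side by the corresponding $p$-affine isoperimetric inequality, its value along an ancient solution must converge as $t\to-\infty$, which forces the associated entropy production to tend to zero in the backward limit.

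To upgrade this convergence of a single integral quantity into smooth convergence of the bodies themselves, I need uniform a priori bounds on the normalized flow valid on all of $(-\infty,T)$: two-sided bounds on the support function $s$ and on the radius of curvature $\mathfrak{r}$. This is where origin-symmetry and the Harnack estimate enter together. The hypothesis $s(\theta,t)=s(\theta+\pi,t)$, combined with fixed area, yields a uniform lower bound on the inradius and an upper bound on the circumradius. The Harnack inequality, iterated backwards in time within the ancient interval, then controls the speed from above and below and rules out the collapse of $\mathfrak{r}$. Standard parabolic regularity upgrades these bounds to uniform $\mathcal{C}^\infty$ control, so I can extract a smooth subsequential limit $\tilde K_{-\infty}$ of the rescaled flow as $t\to-\infty$. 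By the vanishing of entropy production, $\sigma$ is constant on $\tilde K_{-\infty}$, which is therefore an origin-centred ellipse.

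A concluding rigidity step finishes the proof: either the strict monotonicity of $\Omega_p$ off the self-similar locus forces $K_t$ to lie on that locus at every $t$, or one compares the ancient solution with the self-similar ellipse solution that agrees with $\tilde K_{-\infty}$ in the backward limit and appeals to backwards uniqueness for the parabolic support-function equation. I expect the hardest step to be the derivation of uniform two-sided bounds on $\mathfrak{r}$ for all $t\in(-\infty,T)$, since Harnack estimates of Li--Yau--Hamilton type are intrinsically one-sided; the origin-symmetry hypothesis is used in an essential way here, and non-ellipsoidal (possibly translating) ancient solutions cannot obviously be ruled out without it.
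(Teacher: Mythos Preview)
Your high-level strategy (monotone $SL(2)$-invariant functional, isoperimetric bound, backward saturation, rigidity) is correct, but one step is wrong and the compactness mechanism you propose is not the one the paper uses. The wrong step: origin-symmetry together with fixed enclosed area does \emph{not} give an inradius lower bound or a circumradius upper bound---a family of origin-centred ellipses of area $\pi$ can degenerate. The paper repairs this by working modulo $SL(2)$: at each time it applies the $T_t\in SL(2)$ making the John ellipse of $T_t\tilde K_t$ a disk, so that $1/\sqrt{2}\le s_{T_t\tilde K_t}\le\sqrt{2}$. All the functionals in play ($\sigma$, $\Omega_l$, $A\cdot A^\ast$) are $SL(2)$-invariant, so nothing is lost.

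More importantly, the paper never obtains uniform two-sided bounds on $\mathfrak r$, smooth convergence, or parabolic regularity as $t\to-\infty$; the step you flag as hardest is bypassed entirely. The Harnack estimate is used only to derive the pointwise inequality $\partial_t\sigma\le-\bigl(\tfrac{3p}{p+2}-1\bigr)\sigma_{\mathfrak s}^2\sigma^{-3p/(p+2)}$; combining this with the evolution equation for $\sigma$ and integrating gives a bound on $\int\sigma_{\mathfrak s}^2/\sigma^3\,d\mathfrak s$ in terms of the monotone area product $A\cdot A^\ast$, hence a uniform bound on $\sigma_M/\sigma_m$. The rigidity functional is $\Omega_2$ (not $\Omega_p$), whose derivative is a pure $\int\sigma_{\mathfrak s}^2(\cdots)\,d\mathfrak s$ term and hence vanishes exactly when $\sigma$ is constant. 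A contradiction argument using the uniform lower bound on $\Omega_2$ yields a sequence $t_k\to-\infty$ along which $\tilde\sigma\to1$; then Blaschke selection plus weak convergence of Monge--Amp\`ere measures (no curvature control needed) identifies the Hausdorff limit as a solution of $s^3(s_{\theta\theta}+s)=1$, i.e.\ an origin-centred ellipse by Petty's lemma. Monotonicity of $\Omega_2$ and the $2$-affine isoperimetric inequality then pin $\tilde\Omega_2\equiv2\pi$, forcing $\sigma_{\mathfrak s}\equiv0$ for all $t$.
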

Throughout this paper, we consider origin-symmetric solutions.
\section{Harnack estimate}
In this section, we follow \cite{BA7} to obtain the Harnack estimates for $p$-flows.
\begin{propmain}\label{prop: Harnack estimate} Under the flow (\ref{e: flow0}) we have
$\partial_{t}\left(s^{1-\frac{3p}{p+2}}\mathfrak{r}^{-\frac{p}{p+2}}t^{\frac{p}{2p+2}}\right)\geq0.$
\end{propmain}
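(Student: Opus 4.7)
The quantity in question is $Q = t^{\alpha} F$, where $\alpha := p/(2(p+1))$ and $F := s^{(2-2p)/(p+2)}\mathfrak{r}^{-p/(p+2)}$ is exactly the scalar speed appearing in \eqref{e: flow0}, so that the evolution takes the simpler form $\partial_t X = -F\nu$. We parametrize the evolving curve by its Gauss map, treating $s(\theta,t)$ and $\mathfrak{r}(\theta,t)=s_{\theta\theta}+s$ as functions on $\mathbb{S}^1\times[0,T)$. In this parametrization $\partial_t s = -F$ and $\partial_t \mathfrak{r} = -(F_{\theta\theta}+F)$; differentiating the definition of $F$ and substituting gives
\[
\partial_t F \;=\; \frac{p}{(p+2)\mathfrak{r}}\,F(F_{\theta\theta}+F) + \frac{2(p-1)}{(p+2)s}\,F^2,
\]
with all three coefficients strictly positive for $p>1$.

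Dividing by $Q>0$, the desired inequality $\partial_t Q \geq 0$ is equivalent to
\[
W := \frac{\partial_t F}{F} + \frac{\alpha}{t} \;=\; \frac{p(F_{\theta\theta}+F)}{(p+2)\mathfrak{r}} + \frac{2(p-1)F}{(p+2)s} + \frac{\alpha}{t} \;\geq\; 0.
\]
Because $\alpha/t\to+\infty$ as $t\downarrow 0$ while the other terms remain bounded, $W$ is positive on some initial interval $(0,\delta)$. We then propagate this positivity by a parabolic maximum principle: if $W$ first violated non-negativity at some $t_0>0$ and if $\theta_0\in\mathbb{S}^1$ were a point where $W(\cdot,t_0)$ attained its minimum value $0$, then at $(\theta_0,t_0)$ one would have $W_\theta=0$, $W_{\theta\theta}\geq 0$, and $\partial_t W\leq 0$. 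Differentiating the displayed formula for $W$ once more in time, substituting the evolutions of $s$, $\mathfrak{r}$, and $F$ wherever a time derivative appears, and then using $W(\theta_0,t_0)=0$ as an algebraic identity to eliminate the un-signed terms, one should reduce the resulting expression for $\partial_t W(\theta_0,t_0)$ to a manifestly non-negative combination of $W_{\theta\theta}(\theta_0,t_0)$ and squares. This contradicts $\partial_t W(\theta_0,t_0)\leq 0$, yielding $W\geq 0$ throughout, and hence $\partial_t Q\geq 0$.

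The crux of the argument is the algebra of this final step: the exponent $\alpha=p/(2(p+1))$ is dictated by it, as only this value produces the exact cancellation between the negative contribution $-\alpha/t^2$ arising from $\partial_t(\alpha/t)$ and the positive contributions generated by $\partial_t\mathfrak{r}=-(F_{\theta\theta}+F)$, $\partial_t s=-F$ and the evolution of $F$ itself, once the constraint $W=0$ and the sign of $W_{\theta\theta}$ are fed in. The principal technical burden is the bookkeeping of the many terms in $\partial_t W$ involving $F_\theta$, $F_{\theta\theta\theta}$, $\mathfrak{r}_\theta$, and $s_\theta$; the structure follows the pattern established by Andrews \cite{BA7} for Harnack inequalities of contracting curvature flows, here adapted to the mixed dependence on both $\kappa$ and the support function $s$ peculiar to the $p$ centro-affine normal flow.
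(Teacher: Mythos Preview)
Your approach is essentially the paper's: with the paper's notation $\alpha=-p/(p+2)$, one has $F=s^{1+3\alpha}\mathfrak{r}^{\alpha}$, the paper's $\mathcal{P}=-\partial_tF$, and $\mathcal{R}:=t\mathcal{P}-\tfrac{\alpha}{\alpha-1}F=-tF\,W$, so your minimum-principle argument for $W\ge 0$ is exactly the paper's maximum-principle argument for $\mathcal{R}\le 0$, run in the same Gauss parametrization with the same evolution equations for $s$, $\mathfrak{r}$, and $F$.

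One point to correct in your plan: after substituting $W=0$ (equivalently, $t\mathcal{P}=\tfrac{\alpha}{\alpha-1}F$) at the critical point, the residual terms are \emph{not} squares. What actually remains, besides the good $W_{\theta\theta}$ term, are zero-order expressions in $s$ and $\mathfrak{r}$ alone, with coefficients such as
\[
(3\alpha+1)(3\alpha+2)-\frac{(\alpha-1)(3\alpha+1)^2}{\alpha}
=\frac{(3\alpha+1)(4\alpha+1)}{\alpha},
\qquad
\frac{\alpha(3\alpha+1)}{\alpha-1},
\]
whose signs are forced by the parameter range $p\ge 1$ (i.e.\ $-1<\alpha\le -\tfrac13$), not by any completion of squares. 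So the cancellation dictated by your exponent $p/(2(p+1))$ is real, but the positivity of the leftover comes from sign-checking these coefficients rather than from a sum-of-squares structure; if you execute the bookkeeping expecting squares you will not find them.
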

\begin{proof}
For simplicity we set $\alpha=-\frac{p}{p+2}.$
To prove the proposition, using the parabolic maximum principle we prove that the quantity defined by
\begin{equation}\label{def: expression R}
\mathcal{R}:=t\mathcal{P}-\frac{\alpha}{\alpha-1}s^{1+3\alpha}\mathfrak{r}^{\alpha}
\end{equation}
remains negative as long as the flow exists. Here $\mathcal{P}$ is defined as follows
\begin{equation*}
\mathcal{P}:=\partial_{t}\left(-s^{1+3\alpha}\mathfrak{r}^{\alpha}\right).
\end{equation*}
\begin{lemma}\label{lem: basic guass}\cite{Ivaki}
\begin{itemize}
  \item $\displaystyle\partial_{t}s=-s^{1+3\alpha}\mathfrak{r}^{\alpha},$
  \item $\displaystyle\partial_{t}\mathfrak{r}=-\left[\left(s^{1+3\alpha}\mathfrak{r}^{\alpha}\right)_{\theta\theta}
      +s^{1+3\alpha}\mathfrak{r}^{\alpha}\right].$
\end{itemize}
\end{lemma}
Using the evolution equations of $s$ and $\mathfrak{r}$ we find
\begin{align}\label{def: Q}
\mathcal{P}&=(1+3\alpha)s^{1+6\alpha}\mathfrak{r}^{2\alpha}+\alpha s^{1+3\alpha}\mathfrak{r}^{\alpha-1}\left[\left(s^{1+3\alpha}\mathfrak{r}^{\alpha}\right)_{\theta\theta}+s^{1+3\alpha}\mathfrak{r}^{\alpha}\right]
\nonumber\\
&:=(1+3\alpha)s^{1+6\alpha}\mathfrak{r}^{2\alpha}+\alpha s^{1+3\alpha}\mathfrak{r}^{\alpha-1}\mathcal{Q}.
\end{align}
\begin{lemma}\label{lem: ev P}
 We have the following evolution equation for $\mathcal{P}$ as long as the flow exists:
\begin{align*}
\partial_{t}\mathcal{P}&=-\alpha s^{1+3\alpha}\mathfrak{r}^{\alpha-1}\left[\mathcal{P}_{\theta\theta}+\mathcal{P}\right]+
\left[(3\alpha+1)(3\alpha+2)-\frac{(\alpha-1)(3\alpha+1)^2}{\alpha}\right]s^{1+9\alpha}\mathfrak{r}^{3\alpha}\\
&+\left[-3(3\alpha+1)+\frac{2(\alpha-1)(3\alpha+1)}{\alpha}\right]s^{3\alpha}\mathfrak{r}^{\alpha}\mathcal{P}
-\frac{\alpha-1}{\alpha}\frac{\mathcal{P}^2}{s^{1+3\alpha}\mathfrak{r}^{\alpha}}.
\end{align*}
\end{lemma}
\begin{proof}
We repeatedly use the evolution equation of $s$ and $\mathfrak{r}$ given in Lemma \ref{lem: basic guass}.
\begin{align*}
&\partial_{t}\mathcal{P}\\
&=
-(1+3\alpha)(1+6\alpha)s^{1+9\alpha}\mathfrak{r}^{3\alpha}-2\alpha(1+3\alpha)s^{1+6\alpha}\mathfrak{r}^{2\alpha-1}
\left[\left(s^{1+3\alpha}\mathfrak{r}^{\alpha}\right)_{\theta\theta}+s^{1+3\alpha}\mathfrak{r}^{\alpha}\right]\\
&-\alpha(1+3\alpha)s^{1+6\alpha}\mathfrak{r}^{2\alpha-1}
\left[\left(s^{1+3\alpha}\mathfrak{r}^{\alpha}\right)_{\theta\theta}+s^{1+3\alpha}\mathfrak{r}^{\alpha}\right]\\
&-\alpha(\alpha-1)s^{1+3\alpha}\mathfrak{r}^{\alpha-2}
\left[\left(s^{1+3\alpha}\mathfrak{r}^{\alpha}\right)_{\theta\theta}+s^{1+3\alpha}\mathfrak{r}^{\alpha}\right]^2
-\alpha s^{1+3\alpha}\mathfrak{r}^{\alpha-1}\left[\mathcal{P}_{\theta\theta}+\mathcal{P}\right]\\
&=-(1+3\alpha)(1+6\alpha)s^{1+9\alpha}\mathfrak{r}^{3\alpha}-3\alpha(1+3\alpha)s^{1+6\alpha}\mathfrak{r}^{2\alpha-1}\mathcal{Q}
\\&-\alpha(\alpha-1)s^{1+3\alpha}\mathfrak{r}^{\alpha-2}\mathcal{Q}^2
-\alpha s^{1+3\alpha}\mathfrak{r}^{\alpha-1}\left[\mathcal{P}_{\theta\theta}+\mathcal{P}\right].
\end{align*}
By the definition of $\mathcal{Q}$, (\ref{def: Q}), we have
$$
\mathcal{Q}^2=\frac{\mathcal{P}^2}{\alpha^2 s^{2+6\alpha}\mathfrak{r}^{2\alpha-2}}-\frac{2(3\alpha+1)}{\alpha^2}\frac{\mathcal{P}\mathfrak{r}^{2}}{s}+
\frac{(3\alpha+1)^2}{\alpha^2}s^{6\alpha}\mathfrak{r}^{2\alpha+2}$$
and
$$\mathcal{Q}=\frac{\mathcal{P}
-(1+3\alpha)s^{1+6\alpha}\mathfrak{r}^{2\alpha}}{\alpha s^{1+3\alpha}\mathfrak{r}^{\alpha-1}}.
$$
Substituting these expressions into the evolution equation of $\mathcal{P}$ we find that
\begin{align*}
\partial_{t}\mathcal{P}&=-\alpha s^{1+3\alpha}\mathfrak{r}^{\alpha-1}\left[\mathcal{P}_{\theta\theta}+\mathcal{P}\right]+
\left[(3\alpha+1)(3\alpha+2)-\frac{(\alpha-1)(3\alpha+1)^2}{\alpha}\right]s^{1+9\alpha}\mathfrak{r}^{3\alpha}\\
&-3(3\alpha+1)s^{3\alpha}\mathfrak{r}^{\alpha}\mathcal{P}-\frac{\alpha-1}{\alpha}\frac{\mathcal{P}^2}{s^{1+3\alpha}\mathfrak{r}^{\alpha}}
+\frac{2(\alpha-1)(3\alpha+1)}{\alpha}s^{3\alpha}\mathfrak{r}^{\alpha}\mathcal{P}.
\end{align*}
This completes the proof of Lemma \ref{lem: ev P}.
\end{proof}
We now proceed to find the evolution equation of $\mathcal{R}$ which is defined by (\ref{def: expression R}). First notice that
$$-\alpha s^{1+3\alpha}\mathfrak{r}^{\alpha-1}\mathcal{R}_{\theta\theta}=-t\alpha s^{1+3\alpha}\mathfrak{r}^{\alpha-1}\mathcal{P}_{\theta\theta}+
\frac{\alpha^2}{\alpha-1}s^{1+3\alpha}\mathfrak{r}^{\alpha-1}\left(s^{1+3\alpha}\mathfrak{r}^{\alpha}\right)_{\theta\theta}.$$
Therefore, by Lemma \ref{lem: ev P} and identity (\ref{def: Q}) we get
\begin{align*}
&\partial_{t}\mathcal{R}\\&=-t\alpha s^{1+3\alpha}\mathfrak{r}^{\alpha-1}\left[\mathcal{P}_{\theta\theta}+\mathcal{P}\right]+
t\left[(3\alpha+1)(3\alpha+2)-\frac{(\alpha-1)(3\alpha+1)^2}{\alpha}\right]s^{1+9\alpha}\mathfrak{r}^{3\alpha}\\
&+t\left[-3(3\alpha+1)+\frac{2(\alpha-1)(3\alpha+1)}{\alpha}\right]s^{3\alpha}\mathfrak{r}^{\alpha}\mathcal{P}
-t\frac{\alpha-1}{\alpha}\frac{\mathcal{P}^2}{s^{1+3\alpha}\mathfrak{r}^{\alpha}}
+\mathcal{P}+\frac{\alpha}{\alpha-1}\mathcal{P}\\
&-\alpha s^{1+3\alpha}\mathfrak{r}^{\alpha-1}\mathcal{R}_{\theta\theta}+t\alpha s^{1+3\alpha}\mathfrak{r}^{\alpha-1}\mathcal{P}_{\theta\theta}
-\frac{\alpha^2}{\alpha-1}s^{1+3\alpha}\mathfrak{r}^{\alpha-1}\left(s^{1+3\alpha}\mathfrak{r}^{\alpha}\right)_{\theta\theta}\\
&+\frac{\alpha^2}{\alpha-1}s^{1+3\alpha}\mathfrak{r}^{\alpha-1}\left(s^{1+3\alpha}\mathfrak{r}^{\alpha}\right)
-\frac{\alpha^2}{\alpha-1}s^{1+3\alpha}\mathfrak{r}^{\alpha-1}\left(s^{1+3\alpha}\mathfrak{r}^{\alpha}\right)\\
&+\frac{\alpha(3\alpha+1)}{\alpha-1}s^{1+3\alpha}\mathfrak{r}^{\alpha-1}\left(s^{1+6\alpha}\mathfrak{r}^{2\alpha}\right)
-\frac{\alpha(3\alpha+1)}{\alpha-1}s^{1+3\alpha}\mathfrak{r}^{\alpha-1}\left(s^{1+6\alpha}\mathfrak{r}^{2\alpha}\right)\\
&=-\alpha s^{1+3\alpha}\mathfrak{r}^{\alpha-1}\mathcal{R}_{\theta\theta}+
t\left[(3\alpha+1)(3\alpha+2)-\frac{(\alpha-1)(3\alpha+1)^2}{\alpha}\right]s^{1+9\alpha}\mathfrak{r}^{3\alpha}\\
&+t\left[-3(3\alpha+1)+\frac{2(\alpha-1)(3\alpha+1)}{\alpha}\right]s^{3\alpha}\mathfrak{r}^{\alpha}\mathcal{P}-t\frac{\alpha-1}{\alpha}
\frac{\mathcal{P}^2}{s^{1+3\alpha}\mathfrak{r}^{\alpha}}+\mathcal{P}\\
&+\frac{\alpha}{\alpha-1}\mathcal{P}-\frac{\alpha}{\alpha-1}\mathcal{P}-t\alpha s^{1+3\alpha}\mathfrak{r}^{\alpha-1}\mathcal{P}+\frac{\alpha^2}{\alpha-1}s^{2+6\alpha}\mathfrak{r}^{2\alpha-1}
+\frac{\alpha(3\alpha+1)}{\alpha-1}s^{2+9\alpha}\mathfrak{r}^{3\alpha-1}\\
&=-\alpha s^{1+3\alpha}\mathfrak{r}^{\alpha-1}\mathcal{R}_{\theta\theta}+
t\left[(3\alpha+1)(3\alpha+2)-\frac{(\alpha-1)(3\alpha+1)^2}{\alpha}\right]s^{1+9\alpha}\mathfrak{r}^{3\alpha}\\
&+t\left[-3(3\alpha+1)+\frac{2(\alpha-1)(3\alpha+1)}{\alpha}\right]s^{3\alpha}\mathfrak{r}^{\alpha}\mathcal{P}-t\frac{\alpha-1}{\alpha}
\frac{\mathcal{P}^2}{s^{1+3\alpha}\mathfrak{r}^{\alpha}}+\mathcal{P}\\
&-t\alpha s^{1+3\alpha}\mathfrak{r}^{\alpha-1}\mathcal{P}+\frac{\alpha^2}{\alpha-1}s^{2+6\alpha}\mathfrak{r}^{2\alpha-1}
+\frac{\alpha(3\alpha+1)}{\alpha-1}s^{2+9\alpha}\mathfrak{r}^{3\alpha-1}.\\
\end{align*}
In the last expression, using the definition of $\mathcal{R}$, identity (\ref{def: expression R}), we replace $t\mathcal{P}$ by
$\mathcal{R}+\frac{\alpha}{\alpha-1}s^{1+3\alpha}\mathfrak{r}^{\alpha}.$ Therefore, at the point where the maximum of $\mathcal{R}$ is achieved we obtain
\begin{align*}
&\partial_{t}\mathcal{R}\\&\leq \mathcal{R}\left[-\alpha s^{1+3\alpha}\mathfrak{r}^{\alpha-1}-\frac{\alpha-1}{\alpha}\frac{\mathcal{P}}{s^{1+3\alpha}\mathfrak{r}^{\alpha}}
+\left[\frac{2(\alpha-1)(3\alpha+1)}{\alpha}-3(3\alpha+1)\right]s^{3\alpha}\mathfrak{r}^{\alpha}\right]\\
&+\frac{\alpha}{\alpha-1}\left[\frac{2(\alpha-1)(3\alpha+1)}{\alpha}-3(3\alpha+1)\right]s^{2+6\alpha}\mathfrak{r}^{2\alpha}
+\frac{\alpha(3\alpha+1)}{\alpha-1}s^{2+9\alpha}\mathfrak{r}^{3\alpha-1}\\
&+t\left[(3\alpha+1)(3\alpha+2)-\frac{(\alpha-1)(3\alpha+1)^2}{\alpha}\right]s^{1+9\alpha}\mathfrak{r}^{3\alpha}\\
&\leq  \mathcal{R}\left[-\alpha s^{1+3\alpha}\mathfrak{r}^{\alpha-1}-\frac{\alpha-1}{\alpha}\frac{\mathcal{P}}{s^{1+3\alpha}\mathfrak{r}^{\alpha}}
+\left[\frac{2(\alpha-1)(3\alpha+1)}{\alpha}-3(3\alpha+1)\right]s^{3\alpha}\mathfrak{r}^{\alpha}\right].
\end{align*}
To get the last inequality, we used the fact that the terms on the second and third line are negative for $p\geq 1$. Hence, by the parabolic maximum principle and the fact that at the time zero we have $\mathcal{R}\leq0$, we conclude $\mathcal{R}=t\mathcal{P}-\frac{\alpha}{\alpha-1}s^{1+3\alpha}\mathfrak{r}^{\alpha}\leq 0$.
Negativity of $\mathcal{R}$ is equivalent to
$\partial_{t}\ln\left(s^{1+3\alpha}\mathfrak{r}^{\alpha}\right)\geq\frac{\alpha}{1-\alpha}\frac{1}{t}$
for $t>0.$
From this we infer that
$\partial_{t}\left(s^{1+3\alpha}\mathfrak{r}^{\alpha}t^{\frac{\alpha}{\alpha-1}}\right)\geq 0$
for $t>0.$
\end{proof}
\begin{proposition}
Ancient solutions of the flow (\ref{e: flow0}) satisfy
 $\partial_{t}\left(s\left(\frac{1}{\mathfrak{r}s^3}\right)^{\frac{p}{p+2}}\right)\geq 0.$
 \end{proposition}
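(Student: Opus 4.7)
The plan is to derive this pointwise inequality from the Harnack estimate of the preceding proposition by the standard time-shift trick for ancient solutions (Hamilton's differential Harnack principle). Throughout, set $\mathcal{S} := s^{1-\frac{3p}{p+2}}\mathfrak{r}^{-\frac{p}{p+2}} = s\bigl(\frac{1}{\mathfrak{r}s^3}\bigr)^{\frac{p}{p+2}}$, which is pointwise positive, and $\gamma := \frac{p}{2p+2} > 0$. In the notation $\alpha = -\frac{p}{p+2}$ used above, $\mathcal{S} = s^{1+3\alpha}\mathfrak{r}^{\alpha}$ and $\gamma = \frac{\alpha}{\alpha-1}$, so the previous proposition reads $\partial_t(\mathcal{S}\, t^{\gamma}) \geq 0$ for any solution of the $p$-flow defined on $[0,T)$.

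Since the evolution equation (\ref{e: flow0}) is autonomous, the Harnack estimate holds for arbitrary initial time. Given an ancient solution on $(-\infty, T)$ and any $t_0 \in (-\infty, T)$, the translated family $\tilde{X}(\cdot, \tau) := X(\cdot, t_0 + \tau)$ is a solution of the $p$-flow on $[0, T - t_0)$. Applying the preceding proposition to $\tilde{X}$ and rewriting in the original time variable $t = t_0 + \tau$ yields
\begin{equation*}
\partial_t\bigl(\mathcal{S}(\theta, t)\,(t-t_0)^{\gamma}\bigr) \geq 0 \qquad \text{for every } t > t_0.
\end{equation*}
Expanding the derivative and dividing by $(t-t_0)^{\gamma} > 0$ gives
\begin{equation*}
\partial_t \mathcal{S}(\theta, t) \;\geq\; -\,\frac{\gamma\,\mathcal{S}(\theta, t)}{t - t_0}.
\end{equation*}

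Now fix $(\theta, t)$ in the domain of the ancient solution and let $t_0 \to -\infty$. Since $\mathcal{S}(\theta, t)$ is a fixed positive finite number while $(t - t_0) \to +\infty$, the right-hand side tends to $0$, and we conclude $\partial_t \mathcal{S}(\theta, t) \geq 0$, which is exactly the asserted inequality. There is essentially no obstacle in this step: the argument is a formality once the Harnack estimate is in place, and the only real point to verify is the time-translation invariance of (\ref{e: flow0}), which is immediate since the right-hand side of the evolution equation has no explicit $t$-dependence.
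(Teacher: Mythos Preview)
Your proof is correct and follows essentially the same approach as the paper: both apply the Harnack estimate of the preceding proposition to the time-translated solution starting at $t_0$, obtain $\partial_t\mathcal{S} + \frac{\gamma}{t-t_0}\mathcal{S} \geq 0$, and then let $t_0 \to -\infty$. The paper's argument is slightly more terse but otherwise identical.
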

 \begin{proof}
 By the Harnack estimate every solution of the flow (\ref{e: flow0}) satisfies
 \begin{equation}\label{ie: Harnack Euclidean}
 \partial_{t}\left(s\left(\frac{1}{\mathfrak{r}s^3}\right)^{\frac{p}{p+2}}\right)+\frac{p}{2t(p+1)}\left(s\left(\frac{1}{\mathfrak{r}s^3}\right)^{\frac{p}{p+2}}\right)\geq0.
 \end{equation}
 We let the flow starts from a fixed time $t_0<0$. So the inequality (\ref{ie: Harnack Euclidean}) becomes
  \begin{equation*}
 \partial_{t}\left(s\left(\frac{1}{\mathfrak{r}s^3}\right)^{\frac{p}{p+2}}\right)+\frac{p}{2(t-t_0)(p+1)}\left(s\left(\frac{1}{\mathfrak{r}s^3}\right)^{\frac{p}{p+2}}\right)\geq0.
 \end{equation*}
 Now letting $t_0$ goes to $-\infty$ proves the claim.
\end{proof}
\begin{corollary}\label{cor: harnack estimate all flow}
Every ancient solution of the flow (\ref{e: flow0}) satisfies
$\partial_{t}\left(s\mathfrak{r}^{\frac{1}{3}}\right)\leq 0.$
\end{corollary}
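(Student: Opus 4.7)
The plan is to introduce the abbreviations $\alpha:=-p/(p+2)$ and $v:=s^{3}\mathfrak{r}$, and to observe that both the Harnack quantity of the preceding proposition and the quantity in this corollary are very simple expressions in $s$ and $v$:
\[
s\left(\frac{1}{\mathfrak{r}s^{3}}\right)^{p/(p+2)} = s\,v^{\alpha}, \qquad s\mathfrak{r}^{1/3} = v^{1/3}.
\]
So the job reduces to turning the ancient-solution Harnack inequality $\partial_{t}(s\,v^{\alpha})\ge 0$ into an upper bound on $\partial_{t}v$ and then reading off the sign of $\partial_{t}(v^{1/3})$.

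First I would rewrite the evolution equation of $s$ from Lemma \ref{lem: basic guass} in the compact form $\partial_{t}s = -s\,v^{\alpha}$. Then expanding $\partial_{t}(s\,v^{\alpha}) = v^{\alpha}\partial_{t}s + \alpha s v^{\alpha-1}\partial_{t}v$ by the product rule and substituting this relation, after dividing by the positive quantity $s\,v^{\alpha-1}$ the Harnack inequality becomes the pointwise relation $\alpha\,\partial_{t}v \ge v^{\alpha+1}$.

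The decisive step is then to invoke the sign $\alpha<0$, which is valid for every $p>1$. Dividing by $\alpha$ flips the inequality and gives $\partial_{t}v \le v^{\alpha+1}/\alpha$, itself strictly negative since $v>0$. Multiplying both sides by the positive factor $\tfrac{1}{3}v^{-2/3}$ then yields
\[
\partial_{t}\!\left(s\mathfrak{r}^{1/3}\right)=\partial_{t}(v^{1/3})=\tfrac{1}{3}v^{-2/3}\partial_{t}v \;\le\; \frac{1}{3\alpha}\,v^{\alpha+1/3} \;\le\; 0,
\]
which is exactly the claim. The argument is short algebra; the only point requiring care, and which I would flag as the sole potential pitfall, is the two successive inequality reversals forced by $\alpha<0$ — first when isolating $\partial_{t}v$ and again implicitly when asserting that the resulting upper bound is non-positive. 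No deeper obstacle is expected.
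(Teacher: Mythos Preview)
Your proof is correct and is essentially the same argument as the paper's, just spelled out in more detail: the paper simply notes that $s$ is decreasing and invokes the previous proposition, which amounts to the observation that if $s\,v^{\alpha}$ is non-decreasing and $\partial_t s<0$, then $v^{\alpha}$ must be increasing, hence $v$ (and so $v^{1/3}=s\mathfrak{r}^{1/3}$) is non-increasing since $\alpha<0$. Your substitution $\partial_t s=-s\,v^{\alpha}$ and the subsequent algebra carry out exactly this reasoning explicitly.
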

\begin{proof}
The $s(\cdot,t)$ is decreasing on the time interval $(-\infty, 0].$ The claim now follows from the previous proposition.
\end{proof}
\section{Affine differential setting}
We will recall several definitions from affine differential geometry. Let $\gamma:\mathbb{S}^1\to\mathbb{R}^2$ be an embedded strictly convex curve with the curve parameter $\theta$. Define $\mathfrak{g}(\theta):=[\gamma_{\theta},\gamma_{\theta\theta}]^{1/3}$, where for two vectors $u, v$ in $\mathbb{R}^2$, $[u, v]$ denotes the determinant of
the matrix with rows $u$ and $v$. The affine arc-length is defined as
\begin{equation*}
\mathfrak{s}(\theta):=\int_{0}^{\theta}\mathfrak{g}(\alpha)d\alpha.
\end{equation*}
Furthermore, the affine normal vector $\mathfrak{n}$ is given by
$ \mathfrak{n}:=\gamma_{\mathfrak{s}\mathfrak{s}}.$
In the affine coordinate ${\mathfrak{s}}$, there hold $[\gamma_{\mathfrak{s}},\gamma_{\mathfrak{s}\mathfrak{s}}]=1,$ $\sigma=[\gamma,\gamma_{\mathfrak{s}}],$ and $\sigma_{\mathfrak{s}\mathfrak{s}}+\sigma\mu=1,$ where $\mu=[\gamma_{\mathfrak{s}\mathfrak{s}},\gamma_{\mathfrak{s}\mathfrak{s}\mathfrak{s}}]$ is the affine curvature.

We can express the area of $K\in\mathcal{K}$, denoted by $A(K)$, in terms of affine invariant quantities:
$$A(K)=\frac{1}{2}\int_{\partial K}\sigma d\mathfrak{s}.$$
The $p$-affine perimeter of $K\in \mathcal{K}_0$ (for $p=1$ the assumption $K\in \mathcal{K}_0$ is not necessary and we may take $K\in \mathcal{K}$), denoted by $\Omega_p(K)$, is defined as
\[\Omega_p(K):=\int_{\partial K}\sigma^{1-\frac{3p}{p+2}}d\mathfrak{s},\]
\cite{Lutwak2}. We call the quantity $\Omega_p^{2+p}(K)/A^{2-p}(K),$ the $p$-affine isoperimetric ratio and mention that it is invariant under $GL(2).$
Moreover, for $p>1$ the $p$-affine isoperimetric inequality states that if $K$ has its centroid at the origin, then
\begin{equation}\label{ie: affine isoperimetric}
\displaystyle\frac{\Omega_p^{2+p}(K)}{A^{2-p}(K)}\leq 2^{p+2}\pi^{2p}
\end{equation}
and equality cases are obtained only for origin-centered ellipses. In the final section, we will use the $2$-affine isoperimetric inequality .

Let $K\in\mathcal{K}_0$. The polar body of $K$, denoted by $K^{\ast}$, is a convex body in $\mathcal{K}_0$ defined by
\[
K^{\ast} = \{ y \in \mathbb{R}^{2} \mid \langle x , y\rangle \leq 1,\ \forall x
\in K \}.\]

The area of $K^{\ast}$, denoted by $A^{\ast}=A(K^{\ast})$, can be
represented in terms of affine invariant quantities:
$$A^{\ast}=\frac{1}{2}\int_{\partial K}\frac{1}{\sigma^2}d\mathfrak{s}=
\frac{1}{2}\int_{\mathbb{S}^1}\frac{1}{s^2}d\theta.$$

Let $K\in \mathcal{K}_0$. We consider a family of convex bodies $\{K_t\}_t\subset \mathcal{K}$, given by the smooth embeddings $X:\partial K\times[0,T)\to \mathbb{R}^2$, which are
evolving according to (\ref{e: flow0}). Then up to a time-dependant diffeomorphism, $\{K_t\}_t$ evolves according to
\begin{equation}\label{e: affine def of flow}
 \frac{\partial}{\partial t}X:=\sigma^{1-\frac{3p}{p+2}}\mathfrak{n},~~
 X(\cdot,0)=X_{K}(\cdot).
\end{equation}
Therefore, classification of compact, origin-symmetric ancient solutions to (\ref{e: flow0}) is equivalent to the classification of compact, origin-symmetric ancient solutions to
(\ref{e: affine def of flow}). In what follows our reference flow is the evolution equation (\ref{e: affine def of flow}).

Notice that as a family of convex bodies evolve according to the evolution equation (\ref{e: affine def of flow}), in the Gauss parametrization their support functions and radii of curvature evolve according to Lemma \ref{lem: basic guass}. Assume $Q$ and $\bar{Q}$ are two smooth functions $Q:\partial K\times [0,T)\to \mathbb{R}$, $\bar{Q}:\mathbb{S}^1\times[0,T)\to \mathbb{R}$ that are related by
$Q(x,t)=\bar{Q}(\nu(x,t),t)$. It can be easily verified that
\[\partial_{t}\bar{Q}=\partial_{t}Q-Q_{\mathfrak{s}}\left(\sigma^{1-\frac{3p}{p+2}}\right)_{\mathfrak{s}}.\]
In particular, for ancient solutions of (\ref{e: affine def of flow}), in views of Corollary \ref{cor: harnack estimate all flow}, $Q=\sigma$ must satisfy
$
 0\geq
\partial_{t}\sigma-\sigma_{\mathfrak{s}}\left(\sigma^{1-\frac{3p}{p+2}}\right)_{\mathfrak{s}}.
$
The proceeding argument proves the next proposition.
\begin{proposition}\label{cor: harnack estimate all flow1}
Every ancient solution  satisfies
$\partial_{t}\sigma\leq -\left(\frac{3p}{p+2}-1\right)\sigma_{\mathfrak{s}}^2\sigma^{-\frac{3p}{p+2}}.$
\end{proposition}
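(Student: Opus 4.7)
The plan is to unpack Corollary~\ref{cor: harnack estimate all flow} through the change-of-parametrization identity recorded just above the proposition. The key observation is that $s\mathfrak{r}^{1/3}$ is precisely the affine support function expressed as a function on $\mathbb{S}^1$, namely $\bar{\sigma}(\theta,t)$, whereas $\sigma(x,t)$ denotes the same quantity expressed on the moving boundary. Consequently Corollary~\ref{cor: harnack estimate all flow} is nothing other than the pointwise estimate $\partial_t \bar{\sigma} \leq 0$, and the entire task is to rewrite this inequality in terms of $\partial_t \sigma$ and affine arc-length derivatives of $\sigma$.

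First I would substitute $Q = \sigma$ into the identity
\[
\partial_t \bar{Q} = \partial_t Q - Q_{\mathfrak{s}}\bigl(\sigma^{1-\frac{3p}{p+2}}\bigr)_{\mathfrak{s}},
\]
which the author has already recorded as a consequence of the chain rule together with the fact that in the Gauss parametrization the image point moves with speed $\sigma^{1-3p/(p+2)}$ along the affine normal. Combining with $\partial_t \bar{\sigma} \leq 0$ would yield
\[
\partial_t \sigma \leq \sigma_{\mathfrak{s}}\bigl(\sigma^{1-\frac{3p}{p+2}}\bigr)_{\mathfrak{s}}.
\]

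To finish, I would expand the right-hand side using the chain rule, producing $\bigl(1-\tfrac{3p}{p+2}\bigr)\sigma^{-\frac{3p}{p+2}}\sigma_{\mathfrak{s}}^2$; after flipping the sign of the prefactor this is exactly the inequality in the statement. There is no genuine obstacle here: the Harnack estimate of Proposition~\ref{prop: Harnack estimate} and its ancient-solution consequence, Corollary~\ref{cor: harnack estimate all flow}, carry all the difficulty, while the present step is a routine change of variables between the Gauss parametrization and the affine arc-length parametrization. The only point requiring any care is the sign: because $p>1$, the coefficient $\tfrac{3p}{p+2}-1$ is strictly positive, so the resulting bound is an informative (genuine) upper estimate on $\partial_t\sigma$ rather than a tautology.
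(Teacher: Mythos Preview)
Your proposal is correct and follows exactly the argument the paper gives in the paragraph immediately preceding the proposition: apply the change-of-parametrization identity with $Q=\sigma$, use Corollary~\ref{cor: harnack estimate all flow} to bound $\partial_t\bar{\sigma}\le 0$, and then expand $\sigma_{\mathfrak{s}}\bigl(\sigma^{1-\frac{3p}{p+2}}\bigr)_{\mathfrak{s}}$ via the chain rule. There is nothing to add.
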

The next two lemmas were proved in \cite{Ivaki}.
\begin{lemma}\cite[Lemma 3.1]{Ivaki}\label{e: basic ev}
The following evolution equations hold:
\begin{enumerate}
\item $\displaystyle \frac{\partial}{\partial t}\sigma=\sigma^{1-\frac{3p}{p+2}}\left(-\frac{4}{3}+
\left(\frac{p}{p+2}+1\right)\left(1-\frac{3p}{p+2}\right)\frac{\sigma_{\mathfrak{s}}^2}{\sigma}+\frac{p}{p+2}\sigma_{\mathfrak{s}\mathfrak{s}}
\right),$
\item $\displaystyle \frac{d}{dt}A=-\Omega_p.$
\end{enumerate}
\end{lemma}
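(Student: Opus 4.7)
The plan is to work directly in the affine arc-length parametrization, being careful that $\mathfrak{s}$ itself depends on time. Set $\phi := \sigma^{1-\frac{3p}{p+2}}$, so that the flow (\ref{e: affine def of flow}) reads $\partial_{t}\gamma = \phi\,\mathfrak{n}$, and recall the affine Frenet identities $\mathfrak{n} = \gamma_{\mathfrak{s}\mathfrak{s}}$, $\mathfrak{n}_{\mathfrak{s}} = -\mu\gamma_{\mathfrak{s}}$, $\mathfrak{n}_{\mathfrak{s}\mathfrak{s}} = -\mu_{\mathfrak{s}}\gamma_{\mathfrak{s}} - \mu\mathfrak{n}$, all following from $[\gamma_{\mathfrak{s}},\gamma_{\mathfrak{s}\mathfrak{s}}]=1$.

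For (1), the first step is to compute the time-derivative of the affine line element. Starting from $\mathfrak{g}^{3}=[\gamma_{u},\gamma_{uu}]$ in an arbitrary fixed parameter $u$, differentiating in $t$ and evaluating at the instant where $u=\mathfrak{s}$ (so $\mathfrak{g}=1$), the Frenet identities above should yield $\partial_{t}\mathfrak{g}=\tfrac{1}{3}(\phi_{\mathfrak{s}\mathfrak{s}}-2\mu\phi)$. Next I would differentiate $\sigma=[\gamma,\gamma_{\mathfrak{s}}]$ in time, using $\gamma_{\mathfrak{s}}=\gamma_{u}/\mathfrak{g}$ so that $\partial_{t}\gamma_{\mathfrak{s}}=(\phi\mathfrak{n})_{\mathfrak{s}}-\gamma_{\mathfrak{s}}\,\partial_{t}\mathfrak{g}$; combined with the elementary brackets $[\mathfrak{n},\gamma_{\mathfrak{s}}]=-1$ and $[\gamma,\mathfrak{n}]=\sigma_{\mathfrak{s}}$ (the latter by differentiating $\sigma$ in $\mathfrak{s}$), this produces
\[\partial_{t}\sigma = -\phi + \phi_{\mathfrak{s}}\sigma_{\mathfrak{s}} - \tfrac{1}{3}(\phi_{\mathfrak{s}\mathfrak{s}}+\mu\phi)\,\sigma.\]
The remaining step is to eliminate $\mu$ through the structural identity $\sigma\mu=1-\sigma_{\mathfrak{s}\mathfrak{s}}$ and to expand $\phi_{\mathfrak{s}}, \phi_{\mathfrak{s}\mathfrak{s}}$ by the chain rule from $\phi=\sigma^{1-\frac{3p}{p+2}}$; regrouping the coefficients of $1$, $\sigma_{\mathfrak{s}}^{2}/\sigma$ and $\sigma_{\mathfrak{s}\mathfrak{s}}$ should deliver the stated formula.

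For (2), I would take the Euclidean viewpoint. From the Gauss parametrization, the inward normal speed of (\ref{e: flow0}) is $F=s^{1-\frac{3p}{p+2}}\mathfrak{r}^{-\frac{p}{p+2}}$, so $\tfrac{d}{dt}A=-\int_{\partial K_{t}} F\,ds$. Using $ds=\mathfrak{r}\,d\theta$, together with $\mathfrak{g}=\mathfrak{r}^{2/3}$ in the Gauss parametrization (from $[X_{\theta},X_{\theta\theta}]=\mathfrak{r}^{2}$) and $\sigma=s\mathfrak{r}^{1/3}$, one checks that $F\,ds = \sigma^{1-\frac{3p}{p+2}}\,d\mathfrak{s}$, whence the integral is precisely $\Omega_{p}$.

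The main obstacle I anticipate is the first step for (1), i.e.\ the evolution of the affine line element: a sign error in $\partial_{t}\mathfrak{g}$ propagates immediately to the coefficients of $\sigma_{\mathfrak{s}\mathfrak{s}}$ and $\sigma_{\mathfrak{s}}^{2}/\sigma$, and the only honest sanity check is that the final simplification matches the claim under $\alpha=-\tfrac{p}{p+2}$. The rest of the algebra, once $\partial_{t}\mathfrak{g}$ is correct, is routine.
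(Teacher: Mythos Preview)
Your proposal is correct. The intermediate identity $\partial_t\sigma=-\phi+\phi_{\mathfrak{s}}\sigma_{\mathfrak{s}}-\tfrac{1}{3}(\phi_{\mathfrak{s}\mathfrak{s}}+\mu\phi)\sigma$ and the line-element evolution $\partial_t\mathfrak{g}=\tfrac{1}{3}(\phi_{\mathfrak{s}\mathfrak{s}}-2\mu\phi)$ both check out, and substituting $\sigma\mu=1-\sigma_{\mathfrak{s}\mathfrak{s}}$ together with the chain rule for $\phi=\sigma^{\beta}$, $\beta=1-\tfrac{3p}{p+2}$, gives exactly the coefficients $\tfrac{1-\beta}{3}=\tfrac{p}{p+2}$ and $\beta\bigl(1+\tfrac{p}{p+2}\bigr)$ claimed. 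Part (2) is also fine; the identification $F\,ds=\sigma^{1-\frac{3p}{p+2}}\,d\mathfrak{s}$ via $\mathfrak{g}=\mathfrak{r}^{2/3}$ is the standard translation between the Euclidean and affine pictures.

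Note, however, that the present paper does not actually prove this lemma: it is quoted from \cite[Lemma~3.1]{Ivaki} without argument. So there is no ``paper's own proof'' here to compare against. Your affine-frame computation is the natural way to derive (1) and is presumably close in spirit to what is done in \cite{Ivaki}; for (2) one could equally well work purely in the Gauss parametrization using $A=\tfrac{1}{2}\int s\,\mathfrak{r}\,d\theta$ and Lemma~\ref{lem: basic guass}, which avoids the detour through $\mathfrak{g}=\mathfrak{r}^{2/3}$ but is no shorter.
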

\begin{lemma}\cite[Section 6]{Ivaki}
The following evolution equation for $\Omega_{l}$ holds for every $l\geq2$ and $p\geq 1$:
\begin{equation}\label{e: eveq general}
\frac{d}{dt}\Omega_l(t)=\frac{2(l-2)}{l+2}\int_{\gamma_t}\sigma^{1-\frac{3p}{p+2}-\frac{3l}{l+2}}d\mathfrak{s}
+\frac{18pl}{(l+2)^2(p+2)}\int_{\gamma_t}\sigma^{-\frac{3p}{p+2}-\frac{3l}{l+2}}\sigma_{\mathfrak{s}}^2d\mathfrak{s},
\end{equation}
where $\gamma_t:=\partial K_t$ is the boundary of $K_t$.
\end{lemma}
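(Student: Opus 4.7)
The plan is to differentiate $\Omega_l(t) = \int_{\gamma_t}\sigma^{1-\frac{3l}{l+2}}\,d\mathfrak{s}$ under the integral sign, using two ingredients: the evolution equation for $\sigma$ already recorded in Lemma~\ref{e: basic ev}(1), and an auxiliary evolution equation for the affine arc-length element $d\mathfrak{s}$ under the flow (\ref{e: affine def of flow}).

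To produce the latter, I would set $F := \sigma^{1-\frac{3p}{p+2}}$ and work in a fixed reference parameter $u\in\mathbb{S}^1$, so that $d\mathfrak{s} = [\gamma_u,\gamma_{uu}]^{1/3}\,du$. Differentiating in $t$ with $\partial_t \gamma = F\gamma_{\mathfrak{ss}}$, using the normalization $[\gamma_\mathfrak{s},\gamma_{\mathfrak{ss}}] = 1$ and the Frenet-type identity $\gamma_{\mathfrak{sss}} = -\mu\gamma_\mathfrak{s}$, yields a formula for $\partial_t(d\mathfrak{s})/d\mathfrak{s}$ as a first-order expression in $F$ and its affine arc-length derivatives with $\mu$-dependent coefficients. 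The affine curvature $\mu$ is then eliminated through the identity $\sigma\mu = 1 - \sigma_{\mathfrak{ss}}$, producing $\partial_t(d\mathfrak{s})/d\mathfrak{s}$ as a polynomial expression in $\sigma$, $\sigma_{\mathfrak{s}}$ and $\sigma_{\mathfrak{ss}}$.

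Substituting into
\[\frac{d}{dt}\Omega_l = \int_{\gamma_t}\Bigl(1-\tfrac{3l}{l+2}\Bigr)\sigma^{-\frac{3l}{l+2}}\partial_t\sigma\, d\mathfrak{s} + \int_{\gamma_t}\sigma^{1-\frac{3l}{l+2}}\,\partial_t(d\mathfrak{s})\]
and expanding produces an integrand that is a linear combination of $\sigma^{1-\frac{3p}{p+2}-\frac{3l}{l+2}}$, $\sigma^{-\frac{3p}{p+2}-\frac{3l}{l+2}}\sigma_{\mathfrak{s}}^2$ and $\sigma^{1-\frac{3p}{p+2}-\frac{3l}{l+2}}\,\sigma_{\mathfrak{ss}}/\sigma$. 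A single integration by parts in $\mathfrak{s}$, justified by the periodicity of $\sigma$, converts the $\sigma_{\mathfrak{ss}}$ contribution into an extra $\sigma_{\mathfrak{s}}^2$ contribution, and only the first two shapes of integrand remain.

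The main obstacle is the book-keeping of exponents and constants: verifying that the coefficient of the pure bulk integrand collapses exactly to $2(l-2)/(l+2)$ and that the coefficient of the gradient integrand collapses exactly to $18pl/[(l+2)^2(p+2)]$. The vanishing of the bulk coefficient at $l=2$ is a useful internal check (and is precisely what will make the $l=2$ version of this identity amenable to the $2$-affine isoperimetric inequality (\ref{ie: affine isoperimetric})); so is the vanishing of the gradient coefficient as $p\to 0^+$, where the identity should reduce to the standard first variation of the $l$-affine perimeter under the affine normal flow.
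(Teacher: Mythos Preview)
The paper does not supply a proof of this lemma; it is quoted verbatim from \cite[Section~6]{Ivaki}. Your outline --- differentiate $\int \sigma^{1-\frac{3l}{l+2}}\mathfrak{g}\,du$ in a fixed reference parameter, feed in the evolution of $\sigma$ from Lemma~\ref{e: basic ev}(1), derive $\partial_t\mathfrak{g}$ from $\partial_t\gamma = F\gamma_{\mathfrak{ss}}$ via $\gamma_{\mathfrak{sss}}=-\mu\gamma_\mathfrak{s}$, eliminate $\mu$ through $\sigma\mu = 1-\sigma_{\mathfrak{ss}}$, and finally integrate the $\sigma_{\mathfrak{ss}}$ term by parts --- is the standard route to such an identity and is correct in structure; only the algebraic bookkeeping remains, as you acknowledge.

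One minor correction to your closing heuristic: the affine normal flow is the case $p=1$ (so that $F=\sigma^{0}=1$), not the limit $p\to 0^{+}$. At $p=1$ the gradient coefficient $18pl/[(l+2)^{2}(p+2)]$ equals $6l/(l+2)^{2}\neq 0$, so there is no reason to expect the $\sigma_{\mathfrak{s}}^{2}$ term to disappear for the affine normal flow; the vanishing at $p=0$ is merely the factor $p$ in the numerator and carries no useful geometric content. This does not affect the argument itself, only the proposed sanity check.
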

\begin{lemma}\label{lem: mon vol prod}\cite{S}
The area product, $A(t)A^{\ast}(t)$, and the $p$-affine isoperimetric ratio are both non-decreasing along (\ref{e: affine def of flow}).
\end{lemma}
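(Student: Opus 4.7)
The plan is to prove each claim by computing the relevant time derivative and bounding its sign via an integral inequality for moments of the affine support function $\sigma$. The tools at hand are $\tfrac{d}{dt}A=-\Omega_p$ and the pointwise evolution of $\sigma$ from Lemma \ref{e: basic ev}, the evolution of $\Omega_{l}$ from (\ref{e: eveq general}), and the Euclidean evolution $\partial_{t}s=-s^{1+3\alpha}\mathfrak{r}^{\alpha}$ of Lemma \ref{lem: basic guass}, with $\alpha=-p/(p+2)$ and $\beta:=1+3\alpha$.

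For $A\,A^{*}$, I write both areas in affine-invariant form, $2A=\int\sigma\,d\mathfrak{s}$ and $2A^{*}=\int\sigma^{-2}\,d\mathfrak{s}$, and differentiate $A^{*}=\tfrac12\int_{\mathbb{S}^{1}}s^{-2}\,d\theta$ using $\partial_{t}s$. The $\mathfrak{r}$-factors cancel on passing back to affine arclength (via $s=\sigma\mathfrak{r}^{-1/3}$ and $d\mathfrak{s}=\mathfrak{r}^{2/3}d\theta$), yielding $(A^{*})'=\int\sigma^{\beta-3}\,d\mathfrak{s}$. Together with $A'=-\int\sigma^{\beta}d\mathfrak{s}$, monotonicity $(AA^{*})'\geq 0$ reduces to
\[
\int\sigma\,d\mathfrak{s}\cdot\int\sigma^{\beta-3}\,d\mathfrak{s}\;\geq\;\int\sigma^{\beta}\,d\mathfrak{s}\cdot\int\sigma^{-2}\,d\mathfrak{s}.
\]
The exponent pairs $\{1,\beta-3\}$ and $\{\beta,-2\}$ share the sum $\beta-2$, and since $\beta<1$ the first majorizes the second; the inequality then follows from Karamata applied to the convex function $t\mapsto\log\!\int\sigma^{t}\,d\mathfrak{s}$, whose convexity is itself Cauchy--Schwarz.

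For the $p$-affine isoperimetric ratio, the logarithmic derivative equals $(p+2)\Omega_p'/\Omega_p+(2-p)\Omega_p/A$, so positivity is equivalent to $(p+2)A\,\Omega_p'+(2-p)\Omega_p^{2}\geq 0$. Substituting (\ref{e: eveq general}) with $l=p$ and using the Cauchy--Schwarz bound
\[
\Omega_p^{2}=\Bigl(\int\sigma^{1/2}\sigma^{\beta-1/2}\,d\mathfrak{s}\Bigr)^{\!2}\leq 2A\!\int\sigma^{2\beta-1}\,d\mathfrak{s},
\]
the target rearranges as
\[
(p+2)A\Omega_p'+(2-p)\Omega_p^{2}=-(2-p)\Bigl[2A\!\int\sigma^{2\beta-1}d\mathfrak{s}-\Omega_p^{2}\Bigr]+\frac{18\,p^{2}A}{(p+2)^{2}}\!\int\sigma^{2\beta-2}\sigma_{\mathfrak{s}}^{2}\,d\mathfrak{s}.
\]
For $p\geq 2$ both summands are non-negative and the conclusion is immediate.

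The main obstacle is the range $1<p<2$: there $(2-p)>0$ makes the first summand negative, so the Cauchy--Schwarz deficit must be dominated by the gradient integral. The natural attack is to write $\int\sigma^{2\beta-2}\sigma_{\mathfrak{s}}^{2}\,d\mathfrak{s}=\beta^{-2}\int((\sigma^{\beta})_{\mathfrak{s}})^{2}\,d\mathfrak{s}$ and apply a Wirtinger/Poincar\'e-type estimate on the circle parametrised by affine arclength to bound the deficit by a multiple of $\int((\sigma^{\beta})_{\mathfrak{s}})^{2}\,d\mathfrak{s}$. Verifying that the resulting constant is controlled by $18p^{2}/\bigl((p+2)^{2}(2-p)\bigr)$ uniformly for $p\in(1,2)$ is the delicate numerical step.
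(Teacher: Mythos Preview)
The paper does not supply its own proof of this lemma; it is simply quoted from Stancu~\cite{S}. So there is no in-paper argument to compare against, and you are really being asked to furnish a proof.

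Your treatment of the area product is correct and clean. The computation $(A^{*})'=\int_{\gamma_t}\sigma^{\beta-3}\,d\mathfrak{s}$ is right, and the reduction to
\[
\int\sigma\,d\mathfrak{s}\cdot\int\sigma^{\beta-3}\,d\mathfrak{s}\ \geq\ \int\sigma^{\beta}\,d\mathfrak{s}\cdot\int\sigma^{-2}\,d\mathfrak{s}
\]
followed by the log-convexity of $t\mapsto\log\int\sigma^{t}\,d\mathfrak{s}$ (Karamata for the majorisation $(1,\beta-3)\succ(\beta,-2)$, valid since $-2<\beta<0$ for $p>1$) is a nice argument.

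The second half has a genuine gap. First, a minor issue: identity~(\ref{e: eveq general}) is stated in the paper only for $l\geq 2$, so invoking it with $l=p<2$ requires a separate check (the computation does extend, but you must say so). The real problem is the range $1<p<2$. There you reduce matters to bounding the Cauchy--Schwarz deficit
\[
2A\!\int\sigma^{2\beta-1}\,d\mathfrak{s}-\Omega_{p}^{2}
=\tfrac12\iint \sigma(x)\sigma(y)\bigl(\sigma^{\beta-1}(x)-\sigma^{\beta-1}(y)\bigr)^{2}\,d\mathfrak{s}_{x}\,d\mathfrak{s}_{y}
\]
by a fixed multiple of $\int((\sigma^{\beta})_{\mathfrak{s}})^{2}\,d\mathfrak{s}$. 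This is not merely a ``delicate numerical step'': the deficit measures weighted oscillation of $\sigma^{\beta-1}$, not of $\sigma^{\beta}$, and it carries the weight $\sigma(x)\sigma(y)$. A plain Wirtinger/Poincar\'e inequality on the affine-arclength circle does not control it by $\int((\sigma^{\beta})_{\mathfrak{s}})^{2}\,d\mathfrak{s}$ with an absolute constant; any bound you extract this way will depend on $\Omega_{1}$ and on $\sigma_{M}/\sigma_{m}$, not just on $p$. As written, the argument therefore does not establish the monotonicity of $\Omega_{p}^{p+2}/A^{2-p}$ for $1<p<2$. For that range you should either follow Stancu's original computation in~\cite{S}, or look for a rearrangement of $(p+2)A\,\Omega_{p}'+(2-p)\Omega_{p}^{2}$ in which the zeroth-order terms cancel exactly rather than having to be estimated.
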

Write respectively $\max\limits_{\gamma_t}\sigma$ and $\min\limits_{\gamma_t}\sigma$ for $\sigma_M$ and $\sigma_m$.
\begin{lemma} There is a constant $0<c<\infty$ such that
$\frac{\sigma_{M}}{\sigma_m}\leq c$
on $(-\infty,0].$
\end{lemma}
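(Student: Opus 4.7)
The plan is to combine the pointwise Harnack lower bound on $\sigma$ with the monotonicity of the product $A\cdot A^*$ along the flow, and then use the Mahler inequality available under origin-symmetry to convert the resulting integral bounds into a pointwise control on the ratio.

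First, I would use Corollary~2.4 directly. In the Gauss parametrization, $\partial_t(s\mathfrak r^{1/3})\le 0$ at every $\theta$, so for every $t\le 0$
\[
\sigma(\theta,t)\;=\;s(\theta,t)\mathfrak r^{1/3}(\theta,t)\;\ge\;\sigma(\theta,0)\;\ge\;\min_{\mathbb S^1}\sigma(\cdot,0)\;=:\;c_0\;>\;0.
\]
Hence $\sigma_m(t)\ge c_0$ uniformly on $(-\infty,0]$. This is the easy half of the lemma.

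Second, Lemma~3.4 (monotonicity of $A A^*$) gives $A(t)A^*(t)\le A(0)A^*(0)=:C_0$ for $t\le 0$. Because $K_t$ is origin-symmetric in $\mathbb{R}^2$, the Mahler inequality supplies $A(t)A^*(t)\ge 8$, so $A A^*$ is pinned to a bounded interval on the whole ancient interval. Plugging the pointwise bound $\sigma\ge c_0$ from the first step into $2A^*=\int\sigma^{-2}\,d\mathfrak s$ gives $A^*\le c_0^{-2}\Omega_1/2$, and combining with $A^*\ge 8/A$ yields the scale-invariant estimate $A(t)\Omega_1(t)\ge 16c_0^{2}$. Jensen's inequality on $\sigma^{-2}$ against the probability measure $d\mathfrak s/\Omega_1$ gives the complementary bound $\Omega_1^3\le 8A^2 A^*\le 8C_0 A$, so the affine perimeter is sandwiched between quantities comparable to $A^{1/3}$ and $1/A$.

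Third, I would invoke the $2$-affine isoperimetric inequality $\Omega_2\le 2\pi$ (applicable since origin-symmetry forces the centroid at the origin), noting that $\Omega_2$ is scale-invariant and non-decreasing along the flow (by the general $\Omega_l$-evolution in the preceding lemma specialized to $l=2$). Using the elementary pointwise bounds $\sigma_M^{-1/2}\Omega_1\le \Omega_2\le \sigma_m^{-1/2}\Omega_1$, together with the sandwich for $\Omega_1$ produced in the previous paragraph and $\sigma_m\ge c_0$, one closes the loop to get a uniform bound $\sigma_M/\sigma_m\le c$ with $c$ depending only on $K_0$.

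The main obstacle is the last step: converting integral estimates of the form $AA^*\le C_0$ and $\Omega_2\le 2\pi$, together with the pointwise $\sigma\ge c_0$, into a genuine supremum bound on $\sigma_M/\sigma_m$. The difficulty is that $\sigma$ could in principle concentrate near $\sigma_M$ on a set of small $d\mathfrak s$-measure without affecting the integrals very much; the role of the Harnack-generated lower bound $\sigma\ge c_0$ is exactly to prevent this concentration, because it forces a matching lower bound on $\Omega_1$ and an upper bound on $A^*$ that together rule out $\sigma_M$ becoming arbitrarily large relative to $\sigma_m$.
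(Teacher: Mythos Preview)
Your argument has a genuine gap at the closing step, and your own final paragraph essentially concedes it. The ingredients you assemble --- the pointwise lower bound $\sigma\ge c_0$ from Corollary~2.4, the two-sided control $8\le AA^\ast\le C_0$, and the bounds on $\Omega_1,\Omega_2$ --- are all \emph{integral} or \emph{lower} pointwise information about $\sigma$. None of them is sensitive to $\sigma$ having a tall spike on an interval of tiny affine arclength. Concretely, if $\sigma\equiv c_0$ except on an arc of $\mathfrak s$-length $\varepsilon$ where $\sigma=M$, then $A$, $A^\ast$, $\Omega_1$, $\Omega_2$ all change by $O(\varepsilon)$ while $\sigma_M/\sigma_m=M/c_0$ can be arbitrarily large; your lower bound $\sigma\ge c_0$ does nothing to obstruct this. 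The sentence ``the role of the Harnack-generated lower bound $\sigma\ge c_0$ is exactly to prevent this concentration'' is not substantiated: a lower bound on $\sigma$ gives an \emph{upper} bound on $\Omega_1$ via $\Omega_1\le 2A/c_0$ and a \emph{lower} bound on $A^\ast$-type integrals, neither of which caps $\sigma_M$. Likewise, from $\sigma_M^{-1/2}\Omega_1\le\Omega_2\le 2\pi$ you only get a \emph{lower} bound on $\sigma_M$; an upper bound would require a lower bound on $\Omega_2$, which in the paper is obtained only \emph{after} the present lemma (Corollary~3.7).

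The paper's proof uses the Harnack information in a completely different way. It does not try to bound $\sigma_M$ via integral invariants; instead it feeds the differential Harnack inequality of Proposition~3.1 into the full evolution equation for $\sigma$ (Lemma~3.2(1)) to produce, at each fixed time, a \emph{spatial} second-order differential inequality of the form
\[
\frac{4}{3\sigma^2}\;\ge\;c_p\,\frac{\sigma_{\mathfrak s}^2}{\sigma^3}\;+\;\frac{p}{p+2}\,\frac{\sigma_{\mathfrak s\mathfrak s}}{\sigma^2}.
\]
Integrating over $d\mathfrak s$ (the $\sigma_{\mathfrak s\mathfrak s}$ term is handled by parts) and applying H\"older yields a bound on $\int_{\gamma_t}\bigl|(\ln\sigma)_{\mathfrak s}\bigr|\,d\mathfrak s$ in terms of $A(t)A^\ast(t)$, and then the monotonicity of $AA^\ast$ finishes. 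The crucial point you are missing is this \emph{gradient} control on $\ln\sigma$: it is exactly what converts integral information into the oscillation bound $\ln(\sigma_M/\sigma_m)\le C$, and it cannot be manufactured from the zeroth-order estimates you list.
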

\begin{proof}
By Corollary \ref{cor: harnack estimate all flow1} and part (1) of Lemma \ref{e: basic ev} we have
 \begin{align}\label{e: fundamental}
-\left(\frac{3p}{p+2}-1\right)\frac{\sigma_{\mathfrak{s}}^2}{\sigma^3}&\geq\frac{\partial_t\sigma}{\sigma^{3-\frac{3p}{p+2}}}\nonumber\\
&=-\frac{4}{3\sigma^2}
+\left(\frac{p}{p+2}+1\right)\left(1-\frac{3p}{p+2}\right)
\frac{\sigma_{\mathfrak{s}}^2}{\sigma^3}
+\frac{p}{p+2}\frac{\sigma_{\mathfrak{s}\mathfrak{s}}}{\sigma^2}.
 \end{align}
 Integrating the inequality (\ref{e: fundamental}) against $d\mathfrak{s}$ we obtain
\begin{align}\label{ie: intergration}
\frac{4}{3}\int_{\gamma_t}\frac{1}{\sigma^2}d\mathfrak{s}&\geq \frac{p}{p+2}\left(2-\frac{3p}{p+2}\right)\int_{\gamma_t}
\frac{\sigma_{\mathfrak{s}}^2}{\sigma^3}d\mathfrak{s}\\
&=\frac{p}{p+2}\left(3-\frac{3p}{p+2}\right)\int_{\gamma_t}
\frac{(\ln \sigma)_{\mathfrak{s}}^2}{\sigma}d\mathfrak{s}\nonumber\\
&\geq \frac{p}{p+2}\left(3-\frac{3p}{p+2}\right)\frac{\left(\int_{\gamma_t}
|(\ln \sigma)_{\mathfrak{s}}|d\mathfrak{s}\right)^2}{\int_{\gamma_t}\sigma d\mathfrak{s}}.\nonumber
\end{align}
Set $d_{p}=\frac{p}{p+2}\left(3-\frac{3p}{p+2}\right)$.
Applying the H\"{o}lder inequality to the left-hand side and the right-hand side of inequality (\ref{ie: intergration}) yields
\begin{equation*}
\left(\int_{\gamma_t}|\left(\ln\sigma\right)_{\mathfrak{s}}|d\mathfrak{s}\right)^2\leq d_p'A^{\ast}(t)A(t),
\end{equation*}
for a new positive constant $d'_p.$ Here we used the identities $\int_{\gamma_t}\frac{1}{\sigma^2}d\mathfrak{s}=2A^{\ast}(t)$ and $\int_{\gamma_t}\sigma d\mathfrak{s}=2A(t).$ Now by Lemma \ref{lem: mon vol prod} we have $A(t)A^{\ast}(t)\leq A(0)A^{\ast}(0).$ This implies that
\begin{equation*}
\left(\ln\frac{\sigma_{M}}{\sigma_m}\right)^2\leq d_p'',
\end{equation*}
for a new positive constant $d''_p.$
Therefore, on $(-\infty,0]$ we find that
\begin{equation}\label{ie: sigma ratio}
\frac{\sigma_{M}}{\sigma_m}\leq c
\end{equation}
for some positive constant $c$.
\end{proof}
Let $\{K_t\}_t$ be a solution of (\ref{e: affine def of flow}). Then the family of convex bodies, $\{\tilde{K}_t\}_t$, defined by
$$\tilde{K}_t:=\sqrt{\frac{\pi}{A(K_t)}}K_t$$
is called a normalized solution to the $p$-flow, equivalently a solution that the area is fixed and is equal to $\pi.$

Furnish every quantity associated with the normalized solution with an over-tilde. For example, the support function, curvature, and the affine support function of $\tilde{K}$ are denoted by $\tilde{s}$, $\tilde{\kappa}$, and $\tilde{\sigma}$, respectively.
\begin{lemma} There is a constant $0<c<\infty$ such that on the time interval $(-\infty,0]$ we have
\begin{equation}\label{ie: uniform para1}
\frac{\tilde{\sigma}_{M}}{\tilde{\sigma}_m}\leq c.
\end{equation}
\end{lemma}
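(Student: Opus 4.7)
The plan is to observe that the quantity $\sigma_M/\sigma_m$ is scale-invariant, so the new lemma follows at once from the previous one. This is essentially a bookkeeping exercise about how $\sigma$ transforms under homotheties.

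First, I would record the scaling behaviour of the affine support function. Under a homothety $K \mapsto \lambda K$ with $\lambda>0$, the support function satisfies $s \mapsto \lambda s$, and because $\mathfrak{r} = s_{\theta\theta}+s$ is linear in $s$, we have $\mathfrak{r}\mapsto \lambda\mathfrak{r}$. Consequently
\[
\sigma = s\,\mathfrak{r}^{1/3} \;\longmapsto\; \lambda^{4/3}\sigma.
\]
In particular the multiplier is a function of $t$ only, not of $\theta$.

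Next, I would apply this to the normalization $\tilde{K}_t = \sqrt{\pi/A(K_t)}\,K_t$. Setting $\lambda(t) := \sqrt{\pi/A(K_t)}$, at each fixed time we obtain
\[
\tilde{\sigma}(\cdot,t) = \lambda(t)^{4/3}\,\sigma(\cdot,t).
\]
Since $\lambda(t)$ is constant along $\partial\tilde{K}_t$, taking maximum and minimum over the curve and forming the ratio cancels this factor:
\[
\frac{\tilde{\sigma}_M(t)}{\tilde{\sigma}_m(t)} \;=\; \frac{\sigma_M(t)}{\sigma_m(t)}.
\]

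The previous lemma gives a uniform bound $\sigma_M/\sigma_m \leq c$ on $(-\infty,0]$, so combining these two identities yields \eqref{ie: uniform para1} with the same constant $c$. There is no real obstacle here; the only thing worth double-checking is the exponent in the scaling of $\sigma$, which follows directly from the definition $\sigma = s\mathfrak{r}^{1/3}$ and the fact that both $s$ and $\mathfrak{r}$ are $1$-homogeneous in the support function.
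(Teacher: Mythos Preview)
Your proof is correct and follows exactly the paper's approach: the paper's proof consists of the single sentence ``The estimate (\ref{ie: sigma ratio}) is scaling invariant, so the same estimate holds for the normalized solution,'' and you have simply spelled out the scaling computation that justifies this invariance.
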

\begin{proof}
The estimate (\ref{ie: sigma ratio}) is scaling invariant, so the same estimate holds for the normalized solution.
\end{proof}
\begin{lemma}\label{lem: controlling derivative of $l$-affine surface area along affine normal flow}
$\Omega_2(t)$ is non-decreasing along the $p$-flow. Moreover, we have
\begin{align*}
\frac{d}{dt}\Omega_2(t)\geq \frac{9p}{4(p+2)}\int_{\gamma_t}\sigma^{-\frac{3p}{p+2}-\frac{3}{2}}\sigma_{\mathfrak{s}}^2d\mathfrak{s}.
\end{align*}
\end{lemma}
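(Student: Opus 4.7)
The plan is to obtain the statement as a direct specialization of the general evolution formula (\ref{e: eveq general}), which is already proved in \cite{Ivaki} and is therefore available for use. Concretely, I would set $l=2$ in (\ref{e: eveq general}) and observe that the coefficient $\frac{2(l-2)}{l+2}$ of the first integral vanishes, so the entire first term drops out. The remaining coefficient simplifies as
\[
\left.\frac{18pl}{(l+2)^2(p+2)}\right|_{l=2}=\frac{36p}{16(p+2)}=\frac{9p}{4(p+2)},
\]
and the exponent of $\sigma$ becomes $-\frac{3p}{p+2}-\frac{3l}{l+2}\big|_{l=2}=-\frac{3p}{p+2}-\frac{3}{2}$. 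Substituting these simplifications into (\ref{e: eveq general}) yields the identity
\[
\frac{d}{dt}\Omega_2(t) = \frac{9p}{4(p+2)}\int_{\gamma_t}\sigma^{-\frac{3p}{p+2}-\frac{3}{2}}\sigma_{\mathfrak{s}}^2\,d\mathfrak{s}.
\]

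Both claims then follow immediately. Since $\sigma>0$ and $p\geq 1$, the integrand on the right is pointwise non-negative, so the identity trivially implies the stated inequality and at the same time shows that $\frac{d}{dt}\Omega_2(t)\geq 0$, i.e.\ that $\Omega_2(t)$ is non-decreasing. The only care required is to check that $l=2$ is admissible in the hypothesis ``$l\geq 2$'' of (\ref{e: eveq general}), which it is. There is no real obstacle here; the content of the lemma is the observation that $l=2$ is the critical exponent at which the first term of (\ref{e: eveq general}) disappears, leaving only the manifestly non-negative term in $\sigma_{\mathfrak{s}}^2$. The statement is recorded with $\geq$ rather than $=$ presumably because subsequent arguments only need this as a lower bound.
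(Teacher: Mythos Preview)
Your proposal is correct and follows exactly the paper's own approach: the paper's proof is simply ``Use the evolution equation (\ref{e: eveq general}) for $l=2$,'' and you have carried out precisely this substitution and simplification.
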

\begin{proof}
Use the evolution equation (\ref{e: eveq general}) for $l=2$.
\end{proof}
\begin{corollary}\label{cor: upper bound of invers ratio}
There exists a constant $0<b_{p}<\infty$ such that
\[\frac{1}{\Omega_2^{4}(t)}<b_{p}\]
on $(-\infty, 0].$
\end{corollary}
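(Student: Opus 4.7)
The plan is to show that $\Omega_2(t)$ stays bounded below by a positive constant on $(-\infty,0]$, which is equivalent to the desired upper bound on $1/\Omega_2^{4}$. Since $\Omega_2$ is scale-invariant—coming from the $GL(2)$-invariant ratio $\Omega_2^{4}/A^{0}$, that is, the $l$-affine isoperimetric ratio at $l=2$—we have $\Omega_2(K_t)=\tilde\Omega_2(t)$, so it suffices to bound $\tilde\Omega_2$ from below for the normalized solution $\{\tilde K_t\}$ satisfying $\tilde A\equiv\pi$.

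The key ingredients are the oscillation bound $\tilde\sigma_M/\tilde\sigma_m\leq c$ from (\ref{ie: uniform para1}) and Mahler's inequality in $\mathbb{R}^2$, which for every origin-symmetric convex body gives $AA^{*}\geq 8$, hence $\tilde A^{*}\geq 8/\pi$. Using $\int \tilde\sigma\,d\tilde{\mathfrak{s}}=2\tilde A=2\pi$ together with $\tilde\sigma\leq c\tilde\sigma_m$, one deduces $\tilde\sigma_m\geq 2\pi/(c\tilde L)$, where $\tilde L:=\int d\tilde{\mathfrak{s}}$ denotes the affine perimeter of $\tilde K_t$. Plugging this into $2\tilde A^{*}=\int \tilde\sigma^{-2}\,d\tilde{\mathfrak{s}}\leq \tilde L\,\tilde\sigma_m^{-2}$ and invoking Mahler yields $\tilde L^{3}\geq 64\pi/c^{2}$, a uniform positive lower bound.

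With $\tilde L$ bounded below, Jensen's inequality applied to the convex function $x\mapsto x^{-1/2}$ and the probability measure $d\tilde{\mathfrak{s}}/\tilde L$ produces
$$\tilde\Omega_2(t)=\int \tilde\sigma^{-1/2}\,d\tilde{\mathfrak{s}}\geq \tilde L\Bigl(\tilde L^{-1}\int \tilde\sigma\,d\tilde{\mathfrak{s}}\Bigr)^{-1/2}=\frac{\tilde L^{3/2}}{\sqrt{2\pi}}\geq \frac{4\sqrt 2}{c},$$
so $1/\Omega_2^{4}(t)\leq c^{4}/1024$ on $(-\infty,0]$, which gives the claim with any $b_p$ exceeding this value.

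The main obstacle is securing the lower bound on $\tilde L$. Stancu's monotonicity (Lemma \ref{lem: mon vol prod}) only supplies the upper estimate $A(t)A^{*}(t)\leq A(0)A^{*}(0)$, and hence only an upper bound on $\tilde A^{*}$; the complementary lower bound on $\tilde A^{*}$ must be imported from outside the parabolic framework, and Mahler's inequality is precisely the right tool for this.
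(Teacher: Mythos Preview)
Your argument is correct, and it takes a genuinely different route from the paper's. Both proofs reduce to bounding the affine perimeter $\tilde L=\tilde\Omega_1$ of the normalized solution from below and then applying the same H\"older/Jensen inequality $\tilde\Omega_2\geq(\tilde\Omega_1^3/\tilde A)^{1/2}$. The paper obtains the lower bound on $\tilde\Omega_1$ by first using the monotonicity of $\tilde\Omega_2$ (Lemma~\ref{lem: controlling derivative of $l$-affine surface area along affine normal flow}) together with $\tilde A=\pi$ and the oscillation bound to force $\tilde\sigma\geq a>0$, and then performing an $SL(2)$ normalization at each time so that $\tilde s$ is bounded above, which in turn bounds $\tilde{\mathfrak r}$ from below and hence $\int\tilde{\mathfrak r}^{2/3}\,d\theta$. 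You instead bypass both the monotonicity of $\tilde\Omega_2$ and the $SL(2)$ step by importing Mahler's inequality $\tilde A\tilde A^{\ast}\geq 8$; combining $2\tilde A^{\ast}\leq\tilde L\,\tilde\sigma_m^{-2}$ with $\tilde\sigma_m\geq 2\pi/(c\tilde L)$ directly yields $\tilde L^{3}\geq 64\pi/c^{2}$. Your approach is shorter and avoids the mildly self-referential use of the monotonicity of $\Omega_2$ to bound $\Omega_2$ itself, at the cost of invoking an external convex-geometric inequality not otherwise used in the paper; the paper's argument, on the other hand, stays entirely within the flow framework and the tools already developed.
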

\begin{proof}
Notice that
$\Omega_2(t)=\left(\int_{\partial\gamma_t}\sigma^{-\frac{1}{2}}d\mathfrak{s}\right)$
is a $GL(2)$ invariant quantity. Therefore, we need only to prove the claim after applying appropriate $SL(2)$ transformations to the normalized solution of the flow.
By the estimate (\ref{ie: uniform para1}) and the facts that $\Omega_2(\tilde{K}_{t})$ is non-decreasing and $A(\tilde{K}_t)=\pi$ we have
\[\frac{c^{\frac{3}{2}}}{2}\tilde{\sigma}_m^{\frac{3}{2}}(t)\tilde{\Omega}_2(0)\geq\frac{1}{2}\tilde{\sigma}_M^{\frac{3}{2}}(t)\tilde{\Omega}_2(0)
\geq\frac{1}{2}\tilde{\sigma}_M^{\frac{3}{2}}(t)\tilde{\Omega}_2(t)\geq
\tilde{ A}(t)=\pi.\]
So we get $\left(\tilde{s}\tilde{\mathfrak{r}}^{1/3}\right)(t)\geq a>0$ on $(-\infty,0]$, for an $a$ independent of $t$. Moreover, as the affine support
function is invariant under $SL(2)$ we may further assume, after applying a length minimizing special linear transformation at each time, that $\tilde{s}(t)<a'<\infty$,
for an $a'$ independent of $t$.
Therefore
\begin{align}\label{e:11}
\frac{\tilde{\Omega}_1^3(t)}{\tilde{A}(t)}=\frac{(\int_{\mathbb{S}^1}\tilde{\mathfrak{r}}^{2/3}d\theta)^3}{\pi}>a''>0,
\end{align}
for an $a''$ independent of $t$.
Now the claim follows from the H\"{o}lder inequality:
\[\left(\int_{\gamma_t}\sigma^{-\frac{1}{2}}d\mathfrak{s}\right)\Omega_1^{\frac{1}{2}}(t)A^{\frac{1}{2}}(t)\geq \int_{\gamma_t}\sigma^{-\frac{1}{2}}d\mathfrak{s}
\int_{\gamma_t}\sigma^{\frac{1}{2}}d\mathfrak{s}
\geq \Omega_1^2(t),\]
so
\[\tilde{\Omega}_2(t)=\Omega_2(t)\geq \left(\frac{\Omega_1^3(t)}{A(t)}\right)^{\frac{1}{2}}=\left(\frac{\tilde{\Omega}_1^3(t)}{\tilde{A}(t)}\right)^{\frac{1}{2}}.\]
\end{proof}
\begin{corollary}\label{cor: liminf idea} As $K_t$ evolve by (\ref{e: affine def of flow}), then the following limit holds:
\begin{equation}
 \liminf_{t\to -\infty}\left(\frac{A(t)}{\Omega_p(t)\Omega_2^{5}(t)}\right)
\int_{\gamma_t}\left(\sigma^{\frac{1}{4}-\frac{3p}{2(p+2)}}\right)_{\mathfrak{s}}^2d\mathfrak{s}=0.
 \label{eq:sup}
 \end{equation}
\end{corollary}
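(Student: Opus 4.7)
The plan is to show $\int_{-\infty}^0 F(t)\,w(t)\,dt < \infty$ for the non-negative weight $w(t) := \Omega_p(t)/A(t) = -\tfrac{d}{dt}\ln A(t)$, whose total integral will be shown to diverge. Since $F(t) \geq 0$, a positive $\liminf$ would force the weighted integral to be infinite, so $\liminf_{t\to-\infty} F(t) = 0$. Here $F(t)$ denotes the expression inside the $\liminf$ in \eqref{eq:sup}.

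Direct expansion writes $\bigl(\sigma^{1/4-3p/(2(p+2))}\bigr)_{\mathfrak{s}}^2$ as a positive multiple of $\sigma^{-3/2-3p/(p+2)}\sigma_{\mathfrak{s}}^2$. By Lemma \ref{lem: controlling derivative of $l$-affine surface area along affine normal flow}, its integral against $d\mathfrak{s}$ is bounded by a positive multiple of $\dot\Omega_2(t)$, so
\[
F(t) \leq C_p\,\frac{A(t)\,\dot\Omega_2(t)}{\Omega_p(t)\,\Omega_2^5(t)}.
\]
Multiplying by $\Omega_p/A$ cancels the $A$ and $\Omega_p$ factors and produces a telescoping derivative,
\[
F(t)\cdot\frac{\Omega_p(t)}{A(t)} \leq C_p\,\frac{\dot\Omega_2}{\Omega_2^5} = -\frac{C_p}{4}\,\frac{d}{dt}\bigl(\Omega_2^{-4}\bigr).
\]
Integrating over $(-\infty,0]$ yields $\int_{-\infty}^0 F(t)\,w(t)\,dt \leq (C_p/4)\bigl[\Omega_2^{-4}(-\infty) - \Omega_2^{-4}(0)\bigr] \leq (C_p/4)\,b_p$ by Corollary \ref{cor: upper bound of invers ratio}.

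It remains to show $\int_{-\infty}^0 w(t)\,dt = +\infty$, equivalently $A(t) \to \infty$ as $t \to -\infty$. Assume for contradiction that $A$ stays bounded. Then $\int_{-\infty}^0\Omega_p\,dt = A(-\infty) - A(0) < \infty$, hence $\liminf\Omega_p = 0$. Monotonicity of the $p$-affine isoperimetric ratio $I_p := \Omega_p^{p+2}/A^{2-p}$ (Lemma \ref{lem: mon vol prod}) combined with $A$ in a bounded positive interval forces $I_p(t)\to 0$ and therefore $\Omega_p(t)\to 0$. For $p=2$ this immediately contradicts $\Omega_2 \geq b_p^{-1/4}$ from Corollary \ref{cor: upper bound of invers ratio}. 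For $p\neq 2$ invoke the uniform oscillation bound \eqref{ie: sigma ratio}: $\sigma$ is pointwise comparable to $\sigma_m$, so $A \asymp \sigma_m L_\mathfrak{s}$, $\Omega_2 \asymp \sigma_m^{-1/2} L_\mathfrak{s}$, and $\Omega_p \asymp \sigma_m^{(2-2p)/(p+2)} L_\mathfrak{s}$. With $A$ bounded and $\Omega_2$ sandwiched between $b_p^{-1/4}$ and $2\pi$ (the latter from the $2$-affine isoperimetric inequality), both $\sigma_m$ and $L_\mathfrak{s}$ lie in bounded positive intervals; hence $\Omega_p$ is bounded below by a positive constant, contradicting $\Omega_p\to 0$.

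The main obstacle is the $A\to\infty$ step for $p\neq 2$: it is not visible from any single monotone quantity and must be assembled from monotonicity of $I_p$, the two-sided bound on $\Omega_2$, and the pointwise oscillation control $\sigma_M/\sigma_m\leq c$, which together pin $\sigma_m$ and $L_\mathfrak{s}$ into positive intervals. The preceding steps are routine manipulation of the evolution identities.
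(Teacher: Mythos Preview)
Your argument is correct and follows essentially the same route as the paper. Both proofs rest on the same three ingredients: the inequality
\[
F(t)\,\frac{\Omega_p(t)}{A(t)}\;\le\;-\frac{C_p}{4}\,\frac{d}{dt}\,\Omega_2^{-4}(t)
\]
coming from Lemma~\ref{lem: controlling derivative of $l$-affine surface area along affine normal flow}, the uniform bound $\Omega_2^{-4}<b_p$ from Corollary~\ref{cor: upper bound of invers ratio}, and the fact that $A(t)\to+\infty$ as $t\to-\infty$. The paper runs this as a contradiction argument (assume $F\ge\varepsilon$, integrate, reach an impossible inequality); you phrase it directly as ``$\int Fw<\infty$ with $\int w=\infty$ forces $\liminf F=0$''. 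These are the same argument in different clothing.

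The one genuine addition in your write-up is a self-contained proof that $A(t)\to+\infty$, which the paper simply invokes without justification. Your argument for this step---combining monotonicity of the $p$-affine isoperimetric ratio, the two-sided control on $\Omega_2$ (from Corollary~\ref{cor: upper bound of invers ratio} below and the $2$-affine isoperimetric inequality above), and the oscillation bound $\sigma_M/\sigma_m\le c$ to pin down $\sigma_m$ and the affine length, hence $\Omega_p$, in positive intervals---is correct and does not create any circularity with the results it cites.
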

\begin{proof}
Suppose on the contrary that there exists an $\varepsilon>0$ small enough, such that
\[\left(\frac{A(t)}{\Omega_p(t)\Omega_2^{5}(t)}\right)
\int_{\gamma_t}\left(\sigma^{\frac{1}{4}-\frac{3p}{2(p+2)}}\right)_{\mathfrak{s}}^2d\mathfrak{s}\geq \varepsilon\frac{\left(\frac{1}{4}-\frac{3p}{2(p+2)}\right)^2}{\frac{9p}{p+2}}\]
on $(-\infty, -N]$ for $N$ large enough. Then
$
\frac{d}{dt}\frac{1}{\Omega_2^4(t)}
\leq\varepsilon\frac{d}{dt}\ln(A(t)).
$
So by integrating this last inequality against $dt$ and by  Corollary \ref{cor: upper bound of invers ratio} we get
\begin{align*}
0<\frac{1}{\Omega_2^4(-N)}&\leq \frac{1}{\Omega_2^4(t)}+\varepsilon \ln(A(-N))-\varepsilon \ln(A(t))\\
&< b_p+\varepsilon \ln(A(-N))-\varepsilon \ln(A(t)).
\end{align*}
Letting $t\to-\infty$ we reach to a contradiction: $\lim\limits_{t\to-\infty}A(t)=+\infty$, that is, the right-hand side becomes negative for large values of $t.$
\end{proof}
\begin{corollary}\label{cor: limit of sigma}
For a sequence of times $\{t_k\}$ as $t_k$ converge to $-\infty$ we have
\[\lim_{t_k\to-\infty}\tilde{\sigma}(t_k)=1.\]
\end{corollary}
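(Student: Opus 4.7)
The plan is to extract a subsequence along which the normalized affine support function converges uniformly to a constant, and then use $\tilde A\equiv\pi$ to identify that constant as~$1$. Set $\beta:=\tfrac14-\tfrac{3p}{2(p+2)}$, which is nonzero for $p>1$.

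First I would verify that the quantity
\[
\frac{A}{\Omega_{p}\Omega_{2}^{5}}\int_{\partial K}\bigl(\sigma^{\beta}\bigr)_{\mathfrak{s}}^{2}\,d\mathfrak{s}
\]
is invariant under $K\mapsto\lambda K$. Using $\sigma\mapsto\lambda^{4/3}\sigma$, $d\mathfrak{s}\mapsto\lambda^{2/3}d\mathfrak{s}$, $A\mapsto\lambda^{2}A$, $\Omega_{p}\mapsto\lambda^{2(2-p)/(p+2)}\Omega_{p}$, and $\Omega_{2}\mapsto\Omega_{2}$, a direct exponent count shows all powers of $\lambda$ cancel. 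Thus Corollary~\ref{cor: liminf idea} applies verbatim to~$\tilde K_{t}$. Since $\tilde A\equiv\pi$, the affine isoperimetric inequality~\eqref{ie: affine isoperimetric} (applied both with the given $p$ and with $p=2$) gives $\tilde\Omega_{p}(t),\tilde\Omega_{2}(t)\le 2\pi$, so the prefactor $\tilde A/(\tilde\Omega_{p}\tilde\Omega_{2}^{5})$ is bounded below by a positive constant on $(-\infty,0]$. Consequently there exists a sequence $t_{k}\to-\infty$ with
\[
\int_{\tilde\gamma_{t_{k}}}\bigl(\tilde\sigma^{\beta}\bigr)_{\tilde{\mathfrak{s}}}^{2}\,d\tilde{\mathfrak{s}}\longrightarrow 0.
\]

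Next I would apply a one-dimensional Poincaré--Sobolev estimate on the closed curve $\tilde\gamma_{t_{k}}$, whose affine length $\tilde\Omega_{1}(t_{k})$ is bounded above by $2\pi$ (the classical affine isoperimetric inequality with $\tilde A=\pi$). Any mean-zero periodic $f$ on a circle of length $L$ satisfies $\|f\|_{\infty}\le\|f_{\tilde{\mathfrak{s}}}\|_{L^{1}}\le\sqrt{L}\,\|f_{\tilde{\mathfrak{s}}}\|_{L^{2}}$, so $\tilde\sigma^{\beta}(\cdot,t_{k})-\overline{\tilde\sigma^{\beta}(\cdot,t_{k})}\to 0$ uniformly along the subsequence. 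Combining this with the uniform two-sided bound $0<a\le\tilde\sigma\le b<\infty$ on $(-\infty,0]$ (available from $\tilde\sigma_{M}/\tilde\sigma_{m}\le c$, from the lower bound $\tilde s\tilde{\mathfrak r}^{1/3}\ge a$, and from the length-minimizing $SL(2)$ normalization used in the proof of Corollary~\ref{cor: upper bound of invers ratio}) and the uniform continuity of $x\mapsto x^{1/\beta}$ on the compact range then upgrades this to $\tilde\sigma(\cdot,t_{k})-c_{k}\to 0$ uniformly, for positive constants $c_{k}\in[a,b]$.

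Finally, Bolzano--Weierstrass yields a further subsequence with $c_{k}\to c_{\infty}\in[a,b]$, so $\tilde\sigma(\cdot,t_{k})\to c_{\infty}$ uniformly. A Blaschke selection applied to $\{\tilde K_{t_{k}}\}$ then produces a limit convex body whose affine support function is the constant $c_{\infty}$, forcing it to be an origin-centred ellipse; reducing such an ellipse to a disk of radius $r$ via $SL(2)$ gives $\sigma\equiv r^{4/3}$ and $A=\pi r^{2}$, so the area constraint $\tilde A\equiv\pi$ forces $r=1$ and hence $c_{\infty}=1$. I expect the main technical hurdle to be the passage from $L^{2}$-smallness of $(\tilde\sigma^{\beta})_{\tilde{\mathfrak{s}}}$ to uniform closeness of $\tilde\sigma$ to a constant, which is exactly where the uniform two-sided bound on $\tilde\sigma$ coming from the $SL(2)$-normalization is essential.
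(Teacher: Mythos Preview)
Your argument is correct and, through the extraction of a subsequence along which $\int_{\tilde\gamma_{t_k}}(\tilde\sigma^{\beta})_{\tilde{\mathfrak s}}^{2}\,d\tilde{\mathfrak s}\to 0$, essentially coincides with the paper's proof. Two minor differences up to that point: you bound $\tilde\Omega_{p}$ and $\tilde\Omega_{2}$ above via the $p$-- and $2$--affine isoperimetric inequalities, whereas the paper simply uses that both are nondecreasing in $t$ (Lemmas~\ref{lem: mon vol prod} and~\ref{lem: controlling derivative of $l$-affine surface area along affine normal flow}) and hence bounded by their values at $t=0$; and your Poincar\'e--Sobolev step is the paper's H\"older estimate $(\tilde\sigma_{M}^{\beta}-\tilde\sigma_{m}^{\beta})^{2}\le\tilde\Omega_{1}\int(\tilde\sigma^{\beta})_{\tilde{\mathfrak s}}^{2}\,d\tilde{\mathfrak s}$ in different language.

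The genuine divergence is in how you pin down the limiting constant. The paper invokes the elementary pointwise bound $\tilde\sigma_{m}\le 1\le\tilde\sigma_{M}$ (\cite[Lemma~10]{BA2}), valid for any convex body with $A=\pi$, which together with $|\tilde\sigma_{M}^{\beta}-\tilde\sigma_{m}^{\beta}|\to 0$ forces $\tilde\sigma_{m},\tilde\sigma_{M}\to 1$ immediately. You instead pass to a Hausdorff limit $\tilde K_{-\infty}$, argue that its affine support function equals the constant $c_{\infty}$, identify it as an ellipse, and read off $c_{\infty}=1$ from the area. This is valid but heavier: the step ``the affine support function of the Hausdorff limit is $c_{\infty}$'' is not automatic, since Hausdorff convergence does not control curvature; you need weak convergence of the curvature (Monge--Amp\`ere) measures together with the uniform convergence of $\tilde s^{3}\tilde{\mathfrak r}=\tilde\sigma^{3}\to c_{\infty}^{3}$ and Petty's lemma, exactly as in the proof of the main theorem. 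So your route is sound but essentially grafts part of the main theorem's proof onto this corollary, where the one-line observation $\tilde\sigma_{m}\le 1\le\tilde\sigma_{M}$ would have sufficed.
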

\begin{proof}
Notice that the quantity $\left(\frac{A(t)}{\Omega_p(t)\Omega_2^{5}(t)}\right)
\int_{\gamma_t}\left(\sigma^{\frac{1}{4}-\frac{3p}{2(p+2)}}\right)_{\mathfrak{s}}^2d\mathfrak{s}$ is scaling invariant and $\frac{\tilde{A}(t)}{\tilde{\Omega}_p(t)\tilde{\Omega}_2^{5}(t)}$ is bounded from below (By Lemmas \ref{lem: mon vol prod} and \ref{lem: controlling derivative of $l$-affine surface area along affine normal flow}, $\tilde{\Omega}_p(t)\leq \tilde{\Omega}_p(0)$ and $\tilde{\Omega}_2(t)\leq \tilde{\Omega}_2(0)$.). Thus Corollary \ref{cor: liminf idea} implies that there exists a sequence of times $\{t_k\}_{k\in\mathbb{N}}$, such that $\lim\limits_{k\to\infty}t_k=-\infty$ and
$$\lim\limits_{t_k\to-\infty}\int_{\tilde{\gamma}_{t_k}}
\left(\tilde{\sigma}^{\frac{1}{4}-\frac{3p}{2(p+2)}}\right)^2_{\tilde{\mathfrak{s}}}d\tilde{\mathfrak{s}}=0.$$
On the other hand, by the H\"{o}lder inequality
\begin{align*}
\frac{\left(\tilde{\sigma}_M^{\frac{1}{4}-\frac{3p}{2(p+2)}}(t_k)-\tilde{\sigma}_m^{\frac{1}{4}-\frac{3p}{2(p+2)}}(t_k)\right)^2}
{\tilde{\Omega}_1(t_k)}\leq \int_{\tilde{\gamma}_{t_k}}
\left(\tilde{\sigma}^{\frac{1}{4}-\frac{3p}{2(p+2)}}\right)^2_{\tilde{\mathfrak{s}}}d\tilde{\mathfrak{s}}.
\end{align*}
Moreover, $\tilde{\Omega}_1(t)$ is bounded from above: Indeed $\left(\frac{\tilde{\Omega}_1^3(t)}{\tilde{A}(t)}\right)^{\frac{1}{2}}\leq \tilde{\Omega}_2(t)\leq \tilde{\Omega}_2(0)$. Therefore, we find that
$$\lim\limits_{t_k\to-\infty}\left(\tilde{\sigma}_M^{\frac{1}{4}-\frac{3p}{2(p+2)}}(t_k)-
\tilde{\sigma}_m^{\frac{1}{4}-\frac{3p}{2(p+2)}}(t_k)\right)^2=0.$$
Since $\tilde{\sigma}_m\leq1$ and $\tilde{\sigma}_M\geq1$ (see \cite[Lemma 10]{BA2}) the claim follows.
\end{proof}
\section{Proof of the main Theorem}
\begin{proof}
For each time $t\in (-\infty, T)$, let $T_t\in SL(2)$ be a special linear transformation that the maximal ellipse contained in $T_t\tilde{K}_t$ is a disk. So by John's ellipsoid lemma we have
\[\frac{1}{\sqrt{2}}\leq s_{T_{t}\tilde{K}_{t}}\leq \sqrt{2}.\]
Then by the Blaschke selection theorem, there is a subsequence of times, denoted again by $\{t_k\}$, such that
$\{T_{t_k}\tilde{K}_{t_k}\}$ converges in the Hausdorff distance to an origin-symmetric convex body $\tilde{K}_{-\infty}$, as $t_k\to-\infty$. By Corollary \ref{cor: limit of sigma}, and by the weak convergence
of the Monge-Amp\`{e}re measures, the support function of $\tilde{K}_{-\infty}$ is the generalized solution of the following Monge-Amp\`{e}re equation on $\mathbb{S}^1$:
\[s^3(s_{\theta\theta}+s)=1\]
Therefore, by Lemma 8.1 of Petty \cite{Petty}, $\tilde{K}_{-\infty}$ is an origin-centered ellipse. This in turn implies that $\lim\limits_{t\to -\infty}\tilde{\Omega}_2(t_k)=2\pi.$
On the other hand, by the $2$-affine isoperimetric inequality, (\ref{ie: affine isoperimetric}), and by Lemma \ref{lem: controlling derivative of $l$-affine surface area along affine normal flow}, for $t\in (\infty, 0]$ we have
\[2\pi\geq \tilde{\Omega}_2(t)\geq \lim_{t_k\to -\infty}\tilde{\Omega}_2(t_k)=2\pi.\]
Thus $\frac{d}{dt}\tilde{\Omega}_2(t)\equiv 0$ on $(-\infty, 0].$ Hence, in view of Lemma \ref{lem: controlling derivative of $l$-affine surface area along affine normal flow}, $K_t$ is an origin-centred ellipse for every time $t\in(-\infty, T).$
\end{proof}

\bibliographystyle{amsplain}

\end{document}